\newcommand{\dbar}{\ensuremath{\overline\partial}}
\newcommand{\C}{\ensuremath{\mathbb{C}}}
\newcommand{\R}{\ensuremath{\mathbb{R}}}
\newcommand{\sumprime}{\if@display\sideset{}{'}\sum%
            \else\sum'\fi}
\begin{document}

\numberwithin{equation}{section}

\newtheorem{theorem}{Theorem}[section]
\newtheorem{proposition}[theorem]{Proposition}
\newtheorem{conjecture}[theorem]{Conjecture}
\def\theconjecture{\unskip}
\newtheorem{corollary}[theorem]{Corollary}
\newtheorem{lemma}[theorem]{Lemma}
\newtheorem{observation}[theorem]{Observation}
\theoremstyle{definition}
\newtheorem{definition}{Definition}
\numberwithin{definition}{section}
\newtheorem{remark}{Remark}
\def\theremark{\unskip}
\newtheorem{question}{Question}
\def\thequestion{\unskip}
\newtheorem{example}{Example}
\def\theexample{\unskip}
\newtheorem{problem}{Problem}

\def\vvv{\ensuremath{\mid\!\mid\!\mid}}
\def\intprod{\mathbin{\lr54}}
\def\reals{{\mathbb R}}
\def\integers{{\mathbb Z}}
\def\N{{\mathbb N}}
\def\complex{{\mathbb C}\/}
\def\dist{\operatorname{dist}\,}
\def\spec{\operatorname{spec}\,}
\def\interior{\operatorname{int}\,}
\def\trace{\operatorname{tr}\,}
\def\cl{\operatorname{cl}\,}
\def\essspec{\operatorname{esspec}\,}
\def\range{\operatorname{\mathcal R}\,}
\def\kernel{\operatorname{\mathcal N}\,}
\def\dom{\operatorname{\mathcal D}\,}
\def\linearspan{\operatorname{span}\,}
\def\lip{\operatorname{Lip}\,}
\def\sgn{\operatorname{sgn}\,}
\def\Z{ {\mathbb Z} }
\def\e{\varepsilon}
\def\p{\partial}
\def\rp{{ ^{-1} }}
\def\Re{\operatorname{Re\,} }
\def\Im{\operatorname{Im\,} }
\def\dbarb{\bar\partial_b}
\def\eps{\varepsilon}
\def\O{\Omega}
\def\Lip{\operatorname{Lip\,}}

\def\Hs{{\mathcal H}}
\def\E{{\mathcal E}}
\def\scriptu{{\mathcal U}}
\def\scriptr{{\mathcal R}}
\def\scripta{{\mathcal A}}
\def\scriptc{{\mathcal C}}
\def\scriptd{{\mathcal D}}
\def\scripti{{\mathcal I}}
\def\scriptk{{\mathcal K}}
\def\scripth{{\mathcal H}}
\def\scriptm{{\mathcal M}}
\def\scriptn{{\mathcal N}}
\def\scripte{{\mathcal E}}
\def\scriptt{{\mathcal T}}
\def\scriptr{{\mathcal R}}
\def\scripts{{\mathcal S}}
\def\scriptb{{\mathcal B}}
\def\scriptf{{\mathcal F}}
\def\scriptg{{\mathcal G}}
\def\scriptl{{\mathcal L}}
\def\scripto{{\mathfrak o}}
\def\scriptv{{\mathcal V}}
\def\frakg{{\mathfrak g}}
\def\frakG{{\mathfrak G}}

\def\ov{\overline}

\thanks{Research supported in part by NSF grant DMS-0805852, a U.S.-China Collaboration in Mathematical Research supplementary to NSF grant DMS-0500909, and by Fok Ying Tung Education Foundation Grant 111004.}

\address{Department of Applied Mathematics, Tongji University, Shanghai, 200092, China}
\address{Department of Mathematical Sciences,
Rutgers University, Camden, NJ 08102, U.S.A.}
\email{boychen@tongji.edu.cn}
\email{sfu@camden.rutgers.edu}

\title{Comparison of the Bergman and Szeg\"{o} kernels}
\author{Bo-Yong Chen and Siqi Fu}
\date{}
\maketitle

\bigskip

\begin{abstract}
The quotient of the Szeg\"{o} and Bergman kernels for a smooth bounded pseudoconvex
domains in ${\mathbb C}^n$ is bounded from above by $\delta|\log\delta|^p$ for any $p>n$, where $\delta$ is the distance to the boundary. For a class of domains that
includes  those of D'Angelo finite type and those with plurisubharmonic defining functions, the quotient is also bounded from below by $\delta|\log\delta|^p$ for any $p<-1$. Moreover, for convex domains, the quotient is bounded from above and below by constant multiples of $\delta$.

\bigskip

\noindent{{\sc Mathematics Subject Classification} (2000): 32A25, 32W05, 32U35.}

\smallskip

\noindent{{\sc Keywords}: Bergman kernel, Szeg\"{o} kernel, $\bar\partial$-operator, $L^2$-estimate, Diederich-Forn{\ae}ss exponent, pluricomplex Green function.}
\end{abstract}

\bigskip

\bigskip

\section{Introduction}

The Bergman and Szeg\"o kernels are two important reproducing kernels in complex analysis. They are related yet distinct. Whereas the Bergman kernel $K$ (as a measure) is  biholomorphically invariant, the Szeg\"o kernel $S$ is not. The former is connected to the $\dbar$-problem and the $\dbar$-Neumann Laplacian and the latter the $\bar{\partial}_b$-problem and the Kohn Laplacian. In his book published in 1972, Stein posted the following problem: What are the relations between $K$ and $S$? He further noted that the relation between $K$ and $S$ was known only in very special circumstances (\cite[p.~20]{Stein72}).

There has been an extensive literature that connects the $\dbar$-Neumann Laplacian to boundary pseudo-differential operators associated with the Kohn Laplacian (cf.~\cite{GreinerStein77, NRSW89, Kohn02}) and mapping properties of the Szeg\"{o} projection to that of the Bergman projection (cf.~\cite{Boas87, BonamiCharpentier90, Machedon88, PhongStein77}). However, there are few results, as far as we know, that directly relate these two kernels.

In this paper, we study boundary behavior of the quotient $S(z, z)/K(z, z)$ of the Szeg\"{o} and Bergman kernels for a smooth bounded pseudoconvex domain $\Omega$ in $\C^n$.
When $\Omega$ is strictly pseudoconvex, boundary limiting behavior of the Bergman kernel
$K(z, z)$ was obtained by H\"{o}rmander \cite{Hormander65} and asymptotic expansions for the Bergman and Szeg\"{o} kernels were established by Fefferman \cite{Fefferman74} and
Boutet-Sj\"{o}strand \cite{BoutetSjostrand76}. As a result, $S(z, z)/K(z, z)$ is asymptotically $\delta(z)/n$ near the boundary, where $\delta(z)$ is the Euclidean distance to the boundary. When $\Omega$ is a pseudoconvex domain of finite type in $\C^2$ or a convex domain of finite type in $\C^n$, estimates of the Bergman kernel on diagonal from above and below were obtained by Catlin \cite{Catlin89} and J. Chen \cite{ChenJH89}. Estimates for the Bergman and Szeg\"{o} kernels (on and off diagonal) and their derivatives from above were established by McNeal\cite{McNeal89, McNeal94}, Nagel et al \cite{NRSW89}, and McNeal-Stein \cite{McNealStein97}. It follows that on
these domains, $S(z, z)/K(z, z)\le C\delta(z)$ for some positive constant $C$.

Our main result can be stated as follows:

\begin{theorem}\label{th:main}
Let $\Omega\subset\subset\C^n$ be a pseudoconvex domain with $C^2$-smooth boundary.
\begin{enumerate}
\item For any $a\in (0,\ 1)$, there exists a constant $C>0$ such that
$$
\frac{S(z,z)}{K(z, z)}\le  C \delta(z)|\log(\delta(z))|^{n/a}.
$$

\item If there exist a neighborhood $U$ of $b\Omega$, a bounded continuous plurisubharmonic function $\varphi$ on $U\cap\Omega$, and a defining function $\rho$ of $\Omega$ satisfying $i\partial\dbar\varphi \ge i\rho^{-1} \partial\dbar\rho$ on $U\cap\Omega$ as currents, then there
exist constants $a\in (0,\ 1]$ and $C>0$ such that
$$
\frac{S(z,z)}{K(z, z)}\ge  C \delta(z)|\log(\delta(z))|^{-1/a}.
$$
\end{enumerate}
\end{theorem}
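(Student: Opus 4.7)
The plan is to exploit the extremal characterizations
\[K(z,z)=\sup\{|g(z)|^2:g\in A^2(\Omega),\ \|g\|_{L^2(\Omega)}\le 1\},\quad S(z,z)=\sup\{|g(z)|^2:g\in H^2(\Omega),\ \|g\|_{L^2(b\Omega)}\le 1\},\]
together with weighted $L^2$-methods for $\dbar$.

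For Part (1), I would show that, for every $f\in H^2(\Omega)$ and every $z_0\in\Omega$ near $b\Omega$, there exists $h\in A^2(\Omega)$ with $h(z_0)=f(z_0)$ and
\[\|h\|_{L^2(\Omega)}^2\le C\,\delta(z_0)|\log\delta(z_0)|^{n/a}\|f\|_{L^2(b\Omega)}^2;\]
the Bergman inequality $|h(z_0)|^2\le K(z_0,z_0)\|h\|_{L^2(\Omega)}^2$ and then taking the supremum over $f$ give Part (1). To build such $h$, I would fix a smooth cut-off $\chi$ equal to $1$ on $B(z_0,\delta(z_0)/4)$ and supported in $B(z_0,\delta(z_0)/2)\subset\Omega$, and solve $\dbar u=f\,\dbar\chi$ with H\"ormander's weighted $L^2$-estimate using a plurisubharmonic weight $\Phi_{z_0}$ that combines the Diederich--Forn\ae ss function $-(-\rho)^a$ (any $a\in(0,1)$) with an extra log pole at $z_0$, schematically
\[\Phi_{z_0}(w)\sim -(n/a)\log(-\rho(w))+2n\,\eta(w)\log|w-z_0|,\]
with $\eta$ a local cut-off near $z_0$. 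The log-pole summand forces $u(z_0)=0$, so that $h:=\chi f-u$ is holomorphic with $h(z_0)=f(z_0)$. The boundary growth of the first term of $\Phi_{z_0}$ gives rise to the $|\log\delta|^{n/a}$ factor, while the Hardy-space Fubini inequality $\|f\|_{L^2(\mathrm{supp}\,\chi)}^2\le C\delta(z_0)\|f\|_{L^2(b\Omega)}^2$ contributes the $\delta(z_0)$.

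For Part (2), the plan is instead to estimate the boundary $L^2$-norm of the Bergman kernel directly: show
\[\|K(\cdot,z_0)\|_{L^2(b\Omega)}^2\le C\,\delta(z_0)^{-1}|\log\delta(z_0)|^{1/a}\,K(z_0,z_0).\]
Testing the Szeg\"o inequality on the Bergman extremal $K(\cdot,z_0)/\sqrt{K(z_0,z_0)}$---which lies in $H^2(\Omega)$ since $K(\cdot,z_0)$ is smooth up to $b\Omega$ on smooth pseudoconvex domains---then gives Part (2). The hypothesis $i\partial\dbar\varphi\ge i\rho^{-1}\partial\dbar\rho$ is the exact compensation needed to produce, from $\varphi$ and $\rho$, a plurisubharmonic weight $\Phi$ on $U\cap\Omega$ of boundary growth $\sim-(1/a)\log\delta$ (an analogue of the Diederich--Forn\ae ss function adapted to the hypothesis). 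Applying Stokes' theorem to a suitable form built from $|K(\cdot,z_0)|^2$ and $e^{-\Phi}$ over the sublevel sets $\{\rho<-t\}$, letting $t\to 0$, and using the reproducing identity $\|K(\cdot,z_0)\|_{L^2(\Omega)}^2=K(z_0,z_0)$ together with the boundedness of $\varphi$ then deliver the bound.

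The main obstacle is the precise construction and calibration of the plurisubharmonic weights, so that exactly the exponents $|\log\delta|^{n/a}$ and $|\log\delta|^{1/a}$ emerge. In Part (1), balancing the singularity of the weight at $z_0$ (needed to force $u(z_0)=0$ and to produce the $|\log\delta|^{n/a}$) against its boundary behavior is delicate. In Part (2), extracting from the hypothesis a plurisubharmonic weight with the right boundary growth and controlling the gradient terms coming from $-\log(-\rho)$ is the technical crux.
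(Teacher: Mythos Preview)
Your proposal has genuine gaps in both parts, and the paper's approach is substantially different from what you outline.

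\textbf{Part (1).} The weight you propose does not produce the factor $|\log\delta|^{n/a}$. With $\Phi_{z_0}\sim -(n/a)\log(-\rho)$ one has $e^{-\Phi}\sim(-\rho)^{n/a}$, a \emph{power} of $\delta$, not a power of $|\log\delta|$; there is no plurisubharmonic weight of the simple form you describe whose boundary behavior is $|\log\delta|^{n/a}$. More seriously, a cut-off supported in the Euclidean ball $B(z_0,\delta(z_0)/2)$ is too isotropic: the $\dbar$-correction $u$ then lives on all of $\Omega$, and the weighted estimate with a $2n$-fold log pole at $z_0$ forces $e^{-\Phi}\sim\delta(z_0)^{-2n}$ on $\operatorname{supp}\dbar\chi$, so the bound you obtain on $\|u\|_{L^2(\Omega)}$ is far worse than $\delta(z_0)|\log\delta(z_0)|^{n/a}$. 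The paper does \emph{not} use H\"ormander's estimate for Part~(1) at all. Instead it uses Blocki's quantitative bound on the sublevel set of the pluricomplex Green function,
\[
\{g_\Omega(\cdot,w)<-1\}\subset\{\delta(\cdot)\le C\,\delta(w)|\log\delta(w)|^{n/a}\},
\]
together with the localization $K_\Omega(w,w)\ge C\,K_{\{g_\Omega(\cdot,w)<-1\}}(w,w)$. For $f\in H^2(\Omega)$ one then integrates $|f|^2$ over this thin strip by the coarea formula to get $\int_{\{g<-1\}}|f|^2\le C\delta(w)|\log\delta(w)|^{n/a}\|f\|_{b\Omega}^2$, and the result follows from the extremal characterizations. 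The exponent $n/a$ comes from the \emph{second} inclusion in Blocki's theorem, which in turn rests on a symmetry estimate for $g_\Omega$; no choice of H\"ormander weight reproduces this. To obtain every $a\in(0,1)$ (not merely $a$ below the Diederich--Forn{\ae}ss index), the paper further localizes the Szeg\"o kernel near a boundary point and reruns Blocki's argument on a local subdomain where $-(-r)^a$ is plurisubharmonic.

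\textbf{Part (2).} Your plan hinges on the assertion that $K_\Omega(\cdot,z_0)$ is smooth up to $b\Omega$, hence in $H^2(\Omega)$. This is \emph{not known} for general smooth bounded pseudoconvex domains; it is essentially Condition~$R$ for the Bergman projection, an open problem. The paper avoids this entirely: it builds a test function
\[
f=\chi\bigl(-\log(-g_\Omega(\cdot,w))\bigr)\,\frac{K_\Omega(\cdot,w)}{\sqrt{K_\Omega(w,w)}}-u,
\]
where the cut-off is along level sets of the pluricomplex Green function (so $f(w)=\sqrt{K_\Omega(w,w)}$ and $u(w)=0$ because of the log pole in the weight $\psi=2n g_\Omega(\cdot,w)-\log(-g_\Omega(\cdot,w)+1)+\varphi$). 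The $\delta$-regularity hypothesis is used exactly to make $\Theta=(-\rho)\partial\dbar\psi+\partial\dbar\rho$ positive, so that Berndtsson's twisted estimate gives, for every $r\in(0,1)$,
\[
(1-r)\int_\Omega |u|^2(-\rho)^{-r}e^{-\psi}\,dV\le \frac{C}{r}\,\frac{|\log(-\rho(w))|^{r/a}}{|\rho(w)|^{r}},
\]
the right-hand side coming from the \emph{first} inclusion in Blocki's theorem, $\{g_\Omega(\cdot,w)\le -1\}\subset\{|\rho|\ge C^{-1}|\rho(w)||\log\rho(w)|^{-1/a}\}$. Letting $r\to 1^-$ and invoking the elementary identity $\|f\|_{b\Omega}^2=\lim_{r\to1^-}(1-r)\int_\Omega|f|^2\delta^{-r}$ then yields $\|f\|_{b\Omega}^2\le C|\log\delta(w)|^{1/a}/\delta(w)$. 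Your Stokes-theorem sketch does not supply either of these two ingredients, and without them there is no mechanism to pass from the interior identity $\|K(\cdot,z_0)\|_{L^2(\Omega)}^2=K(z_0,z_0)$ to a boundary $L^2$ bound.
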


The constant $a$ in the second part of the theorem is a Diederich-Forn{\ae}ss exponent (\cite{DiederichFornaess77}): Namely, there exists a negative plurisubharmonic function $\varphi$ on $\Omega$ such that $C_1 \delta^a(z)\le -\varphi(z)\le C_2 \delta^{a}(z)$ for some positive constants $C_1$ and $C_2$.
It was shown by Catlin \cite{Catlin84, Catlin87} that any smooth bounded pseudoconvex domain of D'Angelo finite type satisfies Property ($P$). Sibony further showed that
for a smooth bounded pseudoconvex domain satisfying Property ($P$), the Diederich-Forn{\ae}ss index, the supremum of the Diederich-Forn{\ae}ss exponents,  is one (see \cite[Theorem~2.4]{Sibony91}).  More recently, Forn{\ae}ss and Herbig \cite{FornaessHerbig08} showed that a smooth bounded domain with a defining function that is plurisubharmonic on the boundary also has Diederich-Forn{\ae}ss index one.

For the convenience of the discussion, a bounded domain that satisfies the condition in (2) will be called {\it $\delta$-regular}. As we will show in Section~\ref{sec:lower}, such a domain is necessarily hyperconvex with a positive Diederich-Forn{\ae}ss index. It is easy to see that the class of $\delta$-regular domains includes smooth bounded pseudoconvex domains with a defining function that is plurisubharmonic on $b\Omega$, and
it is a consequence of the above-mentioned work of Catlin \cite{Catlin87} that this class of domains also includes pseudoconvex domains of D'Angelo finite type (see Proposition~\ref{pro:c} below). Therefore, in light of these and Theorem~\ref{th:main}, we have:

\begin{theorem}
Let $\Omega$ be a smooth bounded pseudoconvex domain in $\C^n$. Suppose that $b\Omega$ is
either of D'Angelo finite type or has a defining function that is plurisubharmonic on $b\Omega$. Then for any constant $a\in (0,\ 1)$, there exist positive
constants $C_1$ and $C_2$ such that
\begin{equation}\label{eq:cor1}
C_1 \delta(z)|\log(\delta(z))|^{-1/a}\le\frac{S(z,z)}{K(z, z)}\le  C_2 \delta(z)|\log(\delta(z))|^{n/a}.
\end{equation}
\end{theorem}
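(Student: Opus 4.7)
The plan is to deduce this as a corollary of Theorem~\ref{th:main}. The upper bound in \eqref{eq:cor1} is immediate from part~(1) of Theorem~\ref{th:main}, which holds for every smooth bounded pseudoconvex domain and every $a\in(0,1)$. The substantive work is the lower bound, for which the task reduces to verifying the hypothesis of Theorem~\ref{th:main}(2)---the $\delta$-regularity of $\Omega$---with a Diederich-Forn{\ae}ss exponent that can be prescribed arbitrarily close to $1$.

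Under either hypothesis of the theorem, $\Omega$ has Diederich-Forn{\ae}ss index equal to $1$. In the case that $b\Omega$ admits a defining function plurisubharmonic on $b\Omega$, this is the Forn{\ae}ss-Herbig theorem \cite{FornaessHerbig08}. In the D'Angelo finite type case, Catlin's theorem \cite{Catlin87} provides Property~$(P)$, which forces Diederich-Forn{\ae}ss index $1$ by Sibony's result \cite{Sibony91}; this is also recorded below as Proposition~\ref{pro:c}. Consequently, for any prescribed $a\in(0,1)$ one can select a $C^2$ defining function $\rho$ of $\Omega$ for which $\widetilde\varphi:=-(-\rho)^a$ is plurisubharmonic on $\Omega$, and is bounded and continuous on $\overline\Omega$.

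The next step is to pass from the plurisubharmonicity of $\widetilde\varphi$ to the current inequality $i\partial\dbar\varphi\ge i\rho^{-1}\partial\dbar\rho$ required by Theorem~\ref{th:main}(2). The direct expansion
$$
i\partial\dbar\widetilde\varphi \;=\; a(1-a)(-\rho)^{a-2}\,i\partial\rho\wedge\dbar\rho \;+\; a(-\rho)^{a-1}\,i\partial\dbar\rho
$$
exhibits a nonnegative leading term of order $(-\rho)^{a-2}$ in the complex normal direction. After rescaling $\varphi:=C\widetilde\varphi$ with $C$ sufficiently large, the normal-normal component of $i\partial\dbar\varphi$ dominates that of $i\rho^{-1}\partial\dbar\rho$ near $b\Omega$, because $(-\rho)^{a-2}$ blows up strictly faster than $(-\rho)^{-1}$ for $a<1$; on the complex tangential directions one has $i\partial\dbar\varphi \ge 0 \ge i\rho^{-1}\partial\dbar\rho$ from pseudoconvexity combined with $\rho<0$. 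Thus the $\delta$-regularity hypothesis holds on a neighborhood $U\cap\Omega$ of $b\Omega$ with this specific $a$, and Theorem~\ref{th:main}(2) yields the desired lower bound in \eqref{eq:cor1} near $b\Omega$. Since $S/K$ is continuous and strictly positive on compact subsets of $\Omega$, the bound extends to all of $\Omega$ with an adjusted constant.

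The main technical obstacle is the final verification of the current inequality: the mixed normal-tangential components of $i\partial\dbar\rho$ must be absorbed into the strictly positive normal-normal contribution by a Cauchy-Schwarz type estimate, so the rescaling constant $C$ must be chosen in terms of the $C^2$ norm of $\rho$ and the defect $1-a$. All remaining ingredients are the cited classical results on the Diederich-Forn{\ae}ss index, together with the two parts of Theorem~\ref{th:main}.
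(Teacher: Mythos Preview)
Your overall architecture is right: the upper bound is Theorem~\ref{th:main}(1), and for the lower bound you must (a) verify that $\Omega$ is $\delta$-regular and (b) use that the Diederich--Forn{\ae}ss index equals $1$ so that the exponent in Theorem~\ref{th:main}(2) can be any $a\in(0,1)$. Step (b), via Catlin--Sibony and Forn{\ae}ss--Herbig, is exactly how the paper proceeds. The gap is in step (a): your candidate $\varphi=-C(-\rho)^a$ does \emph{not} satisfy $\partial\dbar\varphi\ge\rho^{-1}\partial\dbar\rho$ near $b\Omega$, for any choice of $C$.

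Here is why the Cauchy--Schwarz absorption you propose cannot close. Writing $P=\partial\dbar(-(-\rho)^a)\ge 0$, the desired inequality $CP\ge\rho^{-1}\partial\dbar\rho$ rearranges to
\[
(1+Ca(-\rho)^a)\,\partial\dbar\rho \;\ge\; -Ca(1-a)(-\rho)^{a-1}\,\partial\rho\wedge\dbar\rho,
\]
whereas plurisubharmonicity of $-(-\rho)^a$ gives only $\partial\dbar\rho\ge -(1-a)(-\rho)^{-1}\partial\rho\wedge\dbar\rho$. Since $(-\rho)^{-1}$ dominates $Ca(-\rho)^{a-1}$ as $\rho\to 0^-$ for every fixed $C$, the available lower bound is strictly weaker than what is required. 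In terms of your normal--tangential splitting: the mixed block of $\rho^{-1}\partial\dbar\rho$ contributes a term of order $(-\rho)^{-1}|X_T||X_N|$, and absorbing it against the normal positivity $(-\rho)^{a-2}|X_N|^2$ via Cauchy--Schwarz leaves a residual $(-\rho)^{-a}|X_T|^2$ on the right, which blows up and has no counterpart on the left (your tangential inequality $\partial\dbar\varphi\ge 0\ge\rho^{-1}\partial\dbar\rho$ gives no strict positivity to spend). The $C^2$ bound on $\partial\dbar\rho$ does not rescue this, because the obstruction lives in the regime where the Levi form is small but the mixed derivatives are not.

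The paper verifies $\delta$-regularity differently (Proposition~\ref{pro:c}), using structure beyond the Diederich--Forn{\ae}ss index. When $\rho$ is plurisubharmonic on $b\Omega$ one has $\partial\dbar\rho\ge C\rho\,\partial\dbar|z|^2$, so $\rho^{-1}\partial\dbar\rho$ is bounded above and $\varphi=C|z|^2$ already works. In the finite-type case the paper invokes Catlin's bounded plurisubharmonic $\lambda$ with $\partial\dbar\lambda\ge C(-\rho)^{-\tau}\partial\dbar|z|^2$; this supplies precisely the unbounded \emph{tangential} positivity that your construction lacks, and together with a term $-(-\rho)^\eta$ it dominates $\rho^{-1}\partial\dbar\rho$. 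Knowing only that the Diederich--Forn{\ae}ss index is $1$ is not enough to produce such a $\varphi$.
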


The logarithmic terms in the above theorems do not materialize when the domain is convex. More precisely, we have:

\begin{theorem}\label{th:convex}
Let $\Omega\subset\subset\C^n$ be a bounded convex domain with $C^2$-smooth boundary. Then there exist positive constants $C_1$ and $C_2$ such that
\begin{equation}\label{eq:convex}
C_1 \delta(z)\le S(z,z)/K(z, z)\le  C_2\delta(z).
\end{equation}
\end{theorem}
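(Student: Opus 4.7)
The plan is to prove both inequalities by exploiting McNeal's extremal polydisc $P=P(z_0,\delta(z_0))\subset\Omega$ adapted to the convex geometry. Convexity yields a $C^2$ convex (hence plurisubharmonic) defining function $\rho$ with $\delta(z)\asymp-\rho(z)$; the polydisc $P$ is built by greedily choosing an orthonormal complex frame $v_1,\dots,v_n$ with axis lengths $\tau_1,\dots,\tau_n$, where $\tau_1\asymp\delta(z_0)$ corresponds to the complex normal direction and $\operatorname{vol}(P)\asymp K(z_0,z_0)^{-1}$ by McNeal's estimate (valid for convex domains without a finite-type hypothesis). A key geometric consequence is that $P\subset\{-C\delta(z_0)<\rho<0\}$, since by convexity of $\rho$, tangential motion from $z_0$ by distance $\tau_j$ raises $\rho$ only by $\asymp\delta(z_0)$.

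\textbf{Upper bound.} Let $f\in H^2(\Omega)$. The mean-value inequality for the plurisubharmonic function $|f|^2$ on $P$ gives
\[
|f(z_0)|^2\le\frac{C}{\operatorname{vol}(P)}\int_P|f|^2\,dV\le C\,K(z_0,z_0)\int_P|f|^2\,dV.
\]
The classical Hardy-type monotonicity asserts that the slice integral $r\mapsto\int_{\{\rho=-r\}}|f|^2\,d\sigma_r$ is non-increasing in $r$ (a consequence of Green's identity together with plurisubharmonicity of $|f|^2$ and convexity of $\{\rho<-r\}$), with limit $\|f\|_{H^2}^2$ as $r\downarrow 0$. Since $P$ sits in a strip of normal thickness $\asymp\delta(z_0)$, coarea yields $\int_P|f|^2\,dV\le C\delta(z_0)\|f\|_{H^2}^2$. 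Combining these estimates gives $S(z_0,z_0)\le C\delta(z_0)K(z_0,z_0)$.

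\textbf{Lower bound.} We construct $f\in H^2(\Omega)$ with $f(z_0)=K(z_0,z_0)^{1/2}$ and $\|f\|_{H^2}^2\le C/\delta(z_0)$. Heuristically the correct test function is the Bergman extremal $\widetilde k_{z_0}=K(\cdot,z_0)/K(z_0,z_0)^{1/2}$, which concentrates on $P$ with $|\widetilde k_{z_0}|^2\asymp\operatorname{vol}(P)^{-1}$ and therefore has predicted Hardy norm $\|\widetilde k_{z_0}\|_{H^2}^2\asymp\sigma(b\Omega\cap P)/\operatorname{vol}(P)\asymp 1/\tau_1\asymp 1/\delta(z_0)$. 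To realize this prediction without relying on off-diagonal estimates of $K(\cdot,z_0)$ (which are not readily available for general $C^2$ convex domains), we set $g=\chi\cdot K(\cdot,z_0)$ with $\chi$ a smooth cutoff supported in a fixed dilate of $P$ and identically $1$ near $z_0$, and solve $\dbar v=\dbar g$ by the H\"ormander--Skoda $L^2$-technique. The weight combines an Ohsawa--Takegoshi-type singularity $2n\log|z-z_0|$ (forcing $v(z_0)=0$, hence $f(z_0)=g(z_0)=K(z_0,z_0)^{1/2}$) with a plurisubharmonic term built from $-\log(-\rho)$ (positivity of $i\partial\dbar(-\log(-\rho))$ follows from convexity of $\rho$). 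The weighted $L^2$-estimate on $v$ then yields $\|f\|_{H^2}^2\lesssim 1/\delta(z_0)$ for $f=g-v$, so that $S(z_0,z_0)\ge|f(z_0)|^2/\|f\|_{H^2}^2\gtrsim\delta(z_0)K(z_0,z_0)$.

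The principal technical obstacle lies in the lower-bound step: the $\dbar$-correction $v$ must vanish near $b\Omega$ sharply enough that $f=g-v$ has Hardy norm bounded by $1/\delta(z_0)$, neither better nor worse. This requires choosing the weight so that its complex Hessian matches McNeal's anisotropic geometry of $P$ on the tangential scales $\tau_2,\dots,\tau_n$ while capturing the normal scale $\tau_1\asymp\delta(z_0)$. The upper bound by contrast needs only convexity of $\rho$ and plurisubharmonicity of $|f|^2$, and is comparatively routine.
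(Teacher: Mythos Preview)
Your route is genuinely different from the paper's. The paper does not use McNeal polydiscs at all; it reruns the proof of Theorem~\ref{th:main}, replacing Theorem~\ref{th:blocki} by the convex Green-function estimate
\[
\{g_\Omega(\cdot,w)\le -1\}\subset\{C^{-1}\delta(w)\le\delta(\cdot)\le C\delta(w)\}.
\]
For the upper bound it combines this with Proposition~\ref{prop:c} (the localization $K_\Omega(w,w)\ge CK_{\{g_\Omega(\cdot,w)<-1\}}(w,w)$); for the lower bound it uses the cutoff $\chi(-\log(-g_\Omega(\cdot,w)))$ and Berndtsson's weighted estimate (Theorem~\ref{prop:bo}), then lets $r\to 1^-$ via Lemma~\ref{lm:hardy} to turn the $(-\rho)^{-r}$-weighted $L^2$ bound into a Hardy bound.

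There are two genuine gaps in your proposal.

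\emph{Upper bound.} Your chain gives $|f(z_0)|^2\le C\delta(z_0)\operatorname{vol}(P)^{-1}\|f\|_{H^2}^2$, hence $S(z_0,z_0)\le C\delta(z_0)/\operatorname{vol}(P)$; to finish you need $K(z_0,z_0)\ge c/\operatorname{vol}(P)$. But $P\subset\Omega$ and monotonicity give only the \emph{opposite} inequality $K_\Omega(z_0,z_0)\le K_P(z_0,z_0)=c_n/\operatorname{vol}(P)$. The direction you need is the hard half of McNeal's theorem, whose proof (doubling of the $\tau_j$, containment in a comparable polydisc) uses finite type; for general $C^2$ convex $\Omega$ this requires independent justification. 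The paper avoids the issue via Proposition~\ref{prop:c}. (A minor point: the slice-integral monotonicity you invoke is not a standard fact for Euclidean surface measure on general convex level sets; what one actually has, and what suffices, is the uniform bound $\int_{b\Omega_\varepsilon}|f|^2\,dS\le C\|f\|_{H^2}^2$, cf.\ the proof of Lemma~\ref{lm:hhardy}.)

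\emph{Lower bound.} You identify the obstacle correctly but do not supply a mechanism to pass from an $L^2$ estimate on the $\dbar$-correction $v$ to a Hardy bound $\|f\|_{H^2}^2\le C/\delta(z_0)$. A weight of the form $2n\log|z-z_0|-\log(-\rho)$ forces $v(z_0)=0$ and controls $\int_\Omega|v|^2$, but that says nothing about $\int_{b\Omega}|v|^2\,dS$. The paper's device is precisely Theorem~\ref{prop:bo}: for each $0<r<1$ one controls $(1-r)\int_\Omega|u|^2(-\rho)^{-r}e^{-\psi}\,dV$, and Lemma~\ref{lm:hardy} identifies the limit $r\to 1^-$ with the Hardy norm. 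Moreover, the paper's cutoff $\chi(-\log(-g_\Omega))$ is chosen so that $|\dbar\chi|^2$ is automatically dominated by the Hessian of the term $-\log(-g_\Omega+1)$ in $\psi$, and its support lies in $\{g_\Omega\le -1\}\subset\{\delta\ge C^{-1}\delta(w)\}$, which is exactly what controls the factor $(-\rho)^{1-r}$ on the right-hand side. With a polydisc cutoff you would instead have to match $\Theta$ to the anisotropic scales $\tau_2,\dots,\tau_n$, the very difficulty you flag but leave unresolved.
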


Our analysis depends on the $L^2$-estimates for the $\bar\partial$-operator by H\"{o}rmander \cite{Hormander65}, Demailly \cite{Demailly82}, and Berndtsson \cite{Berndtsson01}. We also make essential use of Blocki's estimates for the pluricomplex Green function on hyperconvex domains \cite{Blocki04}.

This paper is organized as follows: In Section~\ref{sec:prelim}, we
establish necessary background for the Hardy spaces, the Bergman and Szeg\"{o} kernels. In
Section~\ref{sec:weighted}, we review the relevant $L^2$-estimates of the $\bar\partial$-operator by H\"{o}rmander \cite{Hormander65}, Demailly \cite{Demailly82}, and Berndtsson \cite{Berndtsson01}. The first part of Theorem~\ref{th:main} is proved in Section~\ref{sec:upper} and the second part in Section~\ref{sec:lower}.

\section{Preliminaries}\label{sec:prelim}

We first establish necessary harmonic analysis background. We refer the reader to \cite{Stein72, Stein93} for an extensive treatise on the subject. Let $D$ be a bounded domain in $\R^N$ with $C^2$-smooth boundary.  Let $D_\eps=\{x\in D \mid \delta_D(x)>\eps\}$, where $\delta_D(x)$ denotes the Euclidean distance to the boundary $bD$.  For $1<p<\infty$, the harmonic Hardy space $h^p(D)$ is the space of harmonic functions $f$ such that
\[
\|f\|^p_{h^p}=\limsup_{\eps\to 0^+} \int_{bD_\eps} |f(z)|^p \, dS <\infty.
\]
The level sets $bD_\eps$ in the above definition can be replaced by those of any defining function of $D$ (see \cite{Stein72}). A classical result says that the non-tangential limit $f^*(y)$ of $f$ exists for almost every point $y$ on $bD$. Furthermore, $f^*\in L^p(bD)$, $\|f\|_{h^p}=\|f^*\|_{L^p(bD)}$, and
\[
f(x)=\int_{bD} P(x, y) f^*(y)\, dS(y),
\]
where $P(x, y)$ is the Poisson kernel of $D$.

Throughout the paper, we will use $C$, together with subscripts, to denote positive constants which could be different in different appearances. We will need the following two simple lemmas.

\begin{lemma}\label{lm:hhardy}
Let $D_1\subset D_2$ be bounded domains in $\R^N$ with $C^2$-smooth boundaries. There exists a positive constant $C$ such that
\begin{equation}\label{eq:hhardy}
\|f\|_{h^p(D_1)}\le C\|f\|_{h^p(D_2)}
\end{equation}
for any $f\in h^p(D_2)$.
\end{lemma}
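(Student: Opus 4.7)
My plan is to majorize $|f|^p$ pointwise on $D_2$ by the Poisson integral of $|f^*|^p$, and then reduce the lemma to a uniform bound on the integral of the Poisson kernel of $D_2$ over the level surfaces of $D_1$. Since $f$ equals the Poisson integral on $D_2$ of its non-tangential boundary values $f^*\in L^p(bD_2)$ and $P_{D_2}(x,\cdot)\,dS$ is a probability measure on $bD_2$, Jensen's inequality gives
\[
|f(x)|^p \le \int_{bD_2} P_{D_2}(x,y)\,|f^*(y)|^p\,dS(y), \qquad x\in D_2.
\]
Integrating over $bD_{1,\varepsilon}$ and applying Tonelli,
\[
\int_{bD_{1,\varepsilon}} |f|^p\,dS \le \int_{bD_2} |f^*(y)|^p\,\Phi_\varepsilon(y)\,dS(y),\qquad \Phi_\varepsilon(y):=\int_{bD_{1,\varepsilon}} P_{D_2}(x,y)\,dS(x).
\]
Taking $\limsup$ as $\varepsilon\to 0^+$, it suffices to produce a constant $C$ with $\Phi_\varepsilon(y)\le C$ for all sufficiently small $\varepsilon>0$ and all $y\in bD_2$.

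To obtain this, I fix an interior point $x_0\in D_1$ with $\delta_{D_1}(x_0)>2\varepsilon_0$ for some small $\varepsilon_0>0$, so that $x_0\in D_{1,\varepsilon}$ whenever $\varepsilon<\varepsilon_0$. For each $y\in bD_2$ the function $u_y(x):=P_{D_2}(x,y)$ is positive and harmonic on $D_2\supset D_{1,\varepsilon}$, and the Poisson representation for $D_{1,\varepsilon}$ yields
\[
u_y(x_0)=\int_{bD_{1,\varepsilon}} P_{D_{1,\varepsilon}}(x_0,x)\,u_y(x)\,dS(x).
\]
Since $D_1$ has $C^2$ boundary, the parallel surfaces $bD_{1,\varepsilon}$ are $C^2$-smooth with uniform bounds as $\varepsilon\to 0^+$ and $x_0$ stays at positive distance from them; consequently $P_{D_{1,\varepsilon}}(x_0,\cdot)\ge c>0$ on $bD_{1,\varepsilon}$ for a constant $c$ independent of $\varepsilon$. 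Therefore
\[
\Phi_\varepsilon(y)\le \frac{1}{c}\int_{bD_{1,\varepsilon}} P_{D_{1,\varepsilon}}(x_0,x)\,u_y(x)\,dS(x)=\frac{1}{c}\,u_y(x_0)=\frac{1}{c}\,P_{D_2}(x_0,y)\le \frac{M}{c},
\]
where $M:=\sup_{y\in bD_2} P_{D_2}(x_0,y)<\infty$ because $x_0$ has positive distance to $bD_2$.

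The step demanding the most care is the uniform lower bound $P_{D_{1,\varepsilon}}(x_0,\cdot)\ge c>0$. It rests on the standard two-sided asymptotic $P_D(x_0,x)\sim \delta_D(x_0)/|x_0-x|^N$ for $C^2$ bounded domains, with constants stable under the variation $\varepsilon\to 0$: the $C^2$-norm of a defining function of $D_{1,\varepsilon}$, the diameter, and $\delta_{D_{1,\varepsilon}}(x_0)$ are all controlled uniformly in $\varepsilon$.
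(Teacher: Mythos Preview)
Your argument is correct and is essentially the paper's own proof: majorize $|f|^p$ on $D_2$ by the Poisson integral $g$ of $|f^*|^p$, fix an interior point $x_0\in D_1$, and use a uniform positive lower bound for the Poisson kernels of the shrinking subdomains at $x_0$ to control $\int_{bD_{1,\varepsilon}} g\,dS$ by $g(x_0)\le C_2\|f^*\|_{L^p(bD_2)}^p$. The only cosmetic differences are that the paper works with the harmonic majorant $g$ directly (rather than your Tonelli/$\Phi_\varepsilon$ formulation, which is the same computation) and takes level sets of the Green function $G_{D_1}$ instead of distance level sets, so the required lower bound comes cleanly from the uniform convergence $P_\varepsilon(x_0,\pi_\varepsilon^{-1}(\cdot))\to P_{D_1}(x_0,\cdot)>0$; your appeal to the asymptotic $P_D(x_0,x)\sim \delta_D(x_0)/|x_0-x|^N$ is the near-boundary form and not quite the right citation here, but the claim $P_{D_{1,\varepsilon}}(x_0,\cdot)\ge c>0$ uniformly in small $\varepsilon$ is true and follows from the same convergence argument.
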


\begin{proof} The proof follows the same lines of arguments as in the proof of Theorem 1 in \cite{Stein72}.  We provide the detail below. Let
\[
g(x)=\int_{bD_2} P_2(x, y)|f^*(y)|^p\, dS(y),
\]
where $P_2(x, y)$ is the Poisson kernel of $D_2$. Then for any $x\in D_2$,
\[
|f(x)|^p=\left|\int_{bD_2} P_2(x, y) f^*(y)\, dS(y)\right|^p\le \int_{bD_2}P_2(x, y)|f^*(y)|^p\, dS(y)=g(x).
\]
Thus $g(x)$ is a harmonic majorant of $|f|^p$ on $D_2$. Now fix a point $x_0$ in $D_1$. Let $G_1(x, y)$ be the Green function of $D_1$.  Let $D_1^\eps=\{x\in D_1 \mid G_1(x, y)>\eps\}$.  Then $-\partial G_1(x, y)/\partial \nu_y  =P_\eps (x, y)$ is the
Poisson kernel of $D_1^\eps$, where $\nu_y$ is the outward normal direction on $bD_1^\eps$.
Let $\pi_\eps\colon bD_1^\eps\to bD_1$ be the projection along the normal direction. Since $P_\eps (x_0, \pi^{-1}_\eps(y))$ converges uniformly on $bD_1$ to $P_1(x_0, y)$ and $C_1=\min\{P_1(x_0, y) \mid y\in bD_1\}>0$, we have
\[
g(x_0)=\int_{bD_1^\eps} P_\eps(x_0, y) g(y)\, dS \ge \frac{C_1}{2} \int_{bD_1^\eps} g(y)\, dS
\]
for sufficiently small $\eps>0$.  It follows that
\[
\begin{aligned}
\int_{b D_1^\eps} |f(x)|^p \, dS &\le \int_{bD_1^\eps} g(x)\, dS \le \frac{2}{C_1} g(x_0)
=\frac{2}{C_1} \int_{b D_2} P_2(x_0, y) |f^*(y)|^p \, dS \\
&\le \frac{2C_2}{C_1} \int_{bD_2} |f^*(y)|^p \, dS =\frac{2C_2}{C_1}\|f\|^p_{h^p(D_2)},
\end{aligned}
\]
where $C_2=\max\{P_2(x_0, y)\mid y\in bD_2\}<\infty$.  Thus \eqref{eq:hhardy} holds with $C=(2C_2/C_1)^{1/p}$.
\end{proof}

In what follows, we will also use $f$ to denote the boundary values $f^*$ for
$f\in h^p(D)$.

\begin{lemma}\label{lm:hardy} Let $D$ be a bounded domain in $\R^N$ with $C^2$-smooth boundary.
For any harmonic function $f$ on $D$,
\begin{equation}\label{eq:hardy1}
\limsup_{\eps\to 0^+} \int_{bD_\eps} |f|^p\, dS
=\limsup_{r\to 1^-} (1-r)\int_D |f(x)|^p \delta^{-r}(x)\, dV.
\end{equation}
Furthermore, when the above limits are finite, then $f\in h^p(D)$ and
\begin{equation}\label{eq:hardy2}
\int_{bD} |f|^p\, dS=\lim_{r\to 1^-} (1-r)\int_D |f(x)|^p \delta^{-r}(x)\, dV.
\end{equation}
\end{lemma}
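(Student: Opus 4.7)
The plan is to reduce the volume integral to a one-dimensional integral via the coarea formula, and then run a short Tauberian calculation that exploits the classical monotonicity of $L^p$-means of harmonic functions along parallel surfaces.

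First I would fix $\eps_0>0$ small enough that the distance function $\delta$ is $C^2$ on the tubular shell $\{0<\delta<2\eps_0\}$ with $|\nabla\delta|\equiv 1$ there, and set
\[
\phi(t):=\int_{bD_t}|f|^p\,dS,\qquad t\in(0,\eps_0].
\]
Since $|\nabla\delta|=1$ on the shell, the coarea formula gives
\[
\int_{\{0<\delta<\eps_0\}}|f|^p\,\delta^{-r}\,dV=\int_0^{\eps_0}t^{-r}\phi(t)\,dt,
\]
while the remaining piece $\int_{\{\delta\ge\eps_0\}}|f|^p\delta^{-r}\,dV$ is bounded uniformly in $r\in(0,1)$ because $|f|^p$ is continuous, hence bounded, on the compact set $\{\delta\ge\eps_0\}\subset D$; multiplied by $(1-r)$ it therefore vanishes as $r\to 1^-$. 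This reduces \eqref{eq:hardy1} to the one-variable identity
\[
\limsup_{t\to 0^+}\phi(t)=\limsup_{r\to 1^-}(1-r)\int_0^{\eps_0}t^{-r}\phi(t)\,dt.
\]

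Next I would invoke the classical fact (see \cite{Stein72}) that $\phi(t)$ is non-decreasing as $t\downarrow 0$; this ultimately comes from the subharmonicity of $|f|^p$ together with the Poisson representation on the inner domains $D_t$. Consequently $A:=\lim_{t\to 0^+}\phi(t)$ exists (possibly $+\infty$) and coincides with $\limsup_{t\to 0^+}\phi(t)$. When $A<\infty$, monotonicity yields $\phi(t)\le A$ on $(0,\eps_0]$, so
\[
(1-r)\int_0^{\eps_0}t^{-r}\phi(t)\,dt\le A\,\eps_0^{1-r}\longrightarrow A.
\]
For the matching lower bound, fix $A'<A$, choose $\eps_2\in(0,\eps_0]$ with $\phi(t)\ge A'$ on $(0,\eps_2]$, and bound
\[
(1-r)\int_0^{\eps_0}t^{-r}\phi(t)\,dt\ge A'(1-r)\int_0^{\eps_2}t^{-r}\,dt=A'\,\eps_2^{1-r}\longrightarrow A';
\]
letting $A'\uparrow A$ shows that the limit on the right-hand side exists and equals $A$. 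The case $A=\infty$ is handled identically using arbitrarily large $A'$. This establishes \eqref{eq:hardy1}; when $A<\infty$, the Poisson representation recalled in the preamble identifies $A$ with $\int_{bD}|f^*|^p\,dS$, giving \eqref{eq:hardy2}.

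The only nontrivial input here is the monotonicity of $\phi$: without it the Tauberian step genuinely fails, since a square-wave-like $\phi$ could have large $\limsup$ but vanishingly small weighted averages $(1-r)\int t^{-r}\phi\,dt$. Everything else---the coarea splitting, the elementary bounds on $\eps^{1-r}$, and absorbing the $\{\delta\ge\eps_0\}$ remainder into a $(1-r)$-small term---is routine bookkeeping.
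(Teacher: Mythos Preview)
Your reduction via the coarea formula is exactly right and matches the paper's first move. The gap is the ``classical fact'' you invoke: the map $t\mapsto\phi(t)=\int_{bD_t}|f|^p\,dS$ is \emph{not} monotone on general $C^2$ domains. Monotonicity of $L^p$-means is classical for concentric spheres (and for horizontal slices of the half-space), because the Poisson semigroup acts as an $L^p$ contraction there; that mechanism breaks down for parallel surfaces of an arbitrary domain. A concrete counterexample: take $D=\{1<|z|<3\}\subset\R^2$, $f(z)=1+\log(3/|z|)$ (harmonic on $D$), and $p=2$. With $g(\rho)=2\pi\rho\,(1+\log(3/\rho))^2$ one has $\phi(t)=g(1+t)+g(3-t)$ and $g'(\rho)=2\pi v(v-2)$ where $v=1+\log(3/\rho)$; hence
\[
\phi'(0)=g'(1)-g'(3)=2\pi\bigl[(1+\log 3)(\log 3-1)+1\bigr]=2\pi(\log 3)^2>0,
\]
so $\phi(t)>\phi(0)$ for small $t>0$. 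Thus your key step ``$\phi(t)\le A$ on $(0,\eps_0]$'' fails, and with it the upper bound in your Tauberian argument. You yourself flag that without monotonicity the Tauberian step collapses; that is exactly the situation.

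What actually makes the lemma work is not monotonicity but the weaker fact that $\lim_{t\to0^+}\phi(t)$ exists in $[0,\infty]$. The paper establishes this as follows: if the weighted volume integral stays bounded, a Chebyshev-type estimate over a shell $\{\eps_1\le\delta\le\eps_2\}$ produces a sequence $\eps_j\to0$ with $\phi(\eps_j)$ bounded; the pullbacks $f\circ\pi_{\eps_j}^{-1}$ then have a weak${}^*$-convergent subsequence in $L^p(bD)$, giving a Poisson representation and hence $f\in h^p(D)$. Once $f\in h^p(D)$ one knows $\phi$ extends continuously to $t=0$, and then the elementary Abelian computation (your argument with ``$\phi(t)\le A$'' replaced by ``$|\phi(t)-A|<\varepsilon$ for $t<\eta$'') finishes the job. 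So the fix is to replace the false monotonicity input by this weak${}^*$ compactness step.
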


\begin{proof}  If the limit on the left hand side of \eqref{eq:hardy1} is finite,  then $f\in h^p(D)$.
Hence
\[
\lim_{\eps\to 0^+} \int_{bD_\eps} |f|^p\, dS=\int_{bD} |f|^p\, dS.
\]
Write
\[
\lambda(\eps)=\int_{bD_\eps} |f|^p\, dS.
\]
Then $\lambda(\eps)$ is continuous on $[0, a]$ for any sufficiently small $a>0$. Therefore,
\[
\lim_{r\to 1^-} (1-r)\int_{D} |f|^p \delta^{-r}\, dV=\lim_{r\to 1^-} (1-r)\int_0^{a}\eps^{-r}\lambda(\eps)\, d\eps=\lambda(0)=\int_{bD} |f|^p\, dS.
\]
Now suppose the limit on the right hand side of \eqref{eq:hardy1} is finite. For any sufficiently small $0<\eps_1<\eps_2$, we assume that $\lambda(\eps)$ takes its minimum on $[\eps_1, \eps_2]$ at $\eps_0$. Then
\[
(1-r)\int_{D} |f|^p \delta^{-r}\, dV\ge (1-r)\int_{\eps_1\le\delta\le\eps_2} |f|^p \delta^{-r}\, dV\ge(\eps_2^{1-r}-\eps_1^{1-r}) \lambda(\eps_0).
\]
Taking $\liminf_{\eps_1\to 0^+}$ and then $\limsup_{r\to 1^-}$, we then have
\[
\infty>\limsup_{r\to 1^-} (1-r)\int_D |f(x)|^p \delta^{-r}(x)\, dV\ge \liminf_{\eps_1\to 0^+} \lambda(\eps_0).
\]
It follows that there exists a sequence $\eps_j\to 0^+$ such that $\lambda(\eps_j)$ is bounded. Let $\pi_\eps\colon bD_\eps\to bD$ be the projection along the outward
normal direction. Then $f_j(x)=f(\pi^{-1}_{\eps_j}(x))$ is a bounded sequence in $L^p(bD)$. By Alaoglu's theorem, it has a subsequence that converges to some $\tilde f\in L^p(bD)$ in the weak* topology. It follows that
\[
f(x)=\int_{bD} P(x, y)\tilde f(y)\, dS(y).
\]
Hence $f\in h^p(D)$ and we can refer back to the first part of the proof.
\end{proof}

We now review the rudiments on the Bergman and Szeg\"{o} kernels. Let $\Omega$ be a bounded domain in $\C^n$ and let $A^2(\Omega)$ be the Bergman space, the space of square integrable holomorphic functions on $\Omega$. The Bergman kernel $K_\Omega(z, w)$ is the reproducing kernel of $A^2(\Omega)$:
\[
f(z)=\int_\Omega K_\Omega(z, w) f(w)\, dV, \qquad \forall f\in A^2(\Omega), \ \forall z\in\Omega.
\]
Assume that $b\Omega$ is of class $C^2$. The Hardy space $H^2(\Omega)$ is the space of holomorphic functions on $\Omega$ that are also in  $h^2(\Omega)$.  The Szeg\"{o} kernel is
the reproducing kernel of $H^2(\Omega)$:
\[
f(z)=\int_{b\Omega} S_\Omega(z, w) f(w)\, dS, \qquad \forall f\in H^2(\Omega), \ \forall z\in\Omega.
\]
It follows from these reproducing properties that
\begin{equation}\label{eq:bergman}
K_\Omega(z, z)=\sup\{ |f(z)|^2; \  f\in A^2(\Omega), \|f\|_\Omega\le 1\}
\end{equation}
and
\begin{equation}\label{eq:szego}
\quad S_\Omega(z, z)=\sup\{|f(z)|^2; \  f\in H^2(\Omega), \|f\|_{b\Omega}\le 1\}.
\end{equation}
From \eqref{eq:bergman}, we know that the Bergman kernel has the decreasing property: if $\Omega_1\subset\Omega_2$, then $K_{\Omega_1}(z, z)\ge K_{\Omega_2}(z, z)$.  Combining Lemma~\ref{lm:hhardy} with \eqref{eq:szego}, we have:

\begin{lemma}\label{lm:lszego}
Let $\Omega_1\subset\Omega_2$ be bounded domains in $\C^n$ with $C^2$-smooth boundaries. Then there exists a constant $C>0$ such that
\begin{equation}\label{eq:lszego}
S_{\Omega_2}(z, z)\le C S_{\Omega_1}(z, z)
\end{equation}
for all $z\in\Omega_1$.
\end{lemma}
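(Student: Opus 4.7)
The plan is to combine the extremal characterization \eqref{eq:szego} of the Szeg\"{o} kernel with the harmonic Hardy-space comparison of Lemma~\ref{lm:hhardy}. Fix $z \in \Omega_1$ and let $f \in H^2(\Omega_2)$ satisfy $\|f\|_{b\Omega_2} \le 1$. Since holomorphic functions are in particular harmonic, $f$ belongs to $h^2(\Omega_2)$ with $\|f\|_{h^2(\Omega_2)} \le 1$, and the whole game is to transfer this Hardy bound from $\Omega_2$ to the smaller $\Omega_1$ and then read off a pointwise estimate at $z$.

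First I would invoke Lemma~\ref{lm:hhardy} (with $p = 2$) to obtain a constant $C_0 = C_0(\Omega_1, \Omega_2) > 0$, independent of $f$, such that $\|f\|_{h^2(\Omega_1)} \le C_0 \|f\|_{h^2(\Omega_2)} \le C_0$. Because $f$ is holomorphic on $\Omega_1$, its restriction lies in $H^2(\Omega_1)$ with $\|f\|_{b\Omega_1} \le C_0$. Applying \eqref{eq:szego} on $\Omega_1$ then gives
\[
|f(z)|^2 \le \|f\|_{b\Omega_1}^2 \, S_{\Omega_1}(z, z) \le C_0^2 \, S_{\Omega_1}(z, z).
\]
Taking the supremum over all admissible $f$ and invoking \eqref{eq:szego} on $\Omega_2$ yields $S_{\Omega_2}(z, z) \le C_0^2 \, S_{\Omega_1}(z, z)$, which is the desired inequality with $C = C_0^2$.

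The only subtlety --- and it is not really an obstacle --- is that Lemma~\ref{lm:hhardy} is stated for (real) harmonic functions while $f$ is complex-valued and holomorphic. One can either apply the lemma separately to $\mathrm{Re}(f)$ and $\mathrm{Im}(f)$, absorbing a harmless universal factor in $C_0$, or simply observe that its proof uses only the existence of a harmonic majorant of $|f|^p$ on $\Omega_2$, which is automatic here since $|f|^2$ is subharmonic and is dominated by the Poisson extension of $|f^*|^2$ from $b\Omega_2$. Either viewpoint carries the argument through verbatim and completes the proof.
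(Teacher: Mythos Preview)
Your proof is correct and is precisely the argument the paper intends: it states the lemma immediately after noting that it follows by ``combining Lemma~\ref{lm:hhardy} with \eqref{eq:szego},'' which is exactly the extremal characterization plus Hardy-space comparison you spell out. Your remark about handling the complex-valued case is appropriate and does not depart from the paper's approach.
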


\section{Weighted $L^2$-estimates for the $\bar\partial$-operator}\label{sec:weighted}

In this section, we review relevant weighted $L^2$-estimates for the $\dbar$-operator of H\"{o}rmander, Demailly,  and Berndtsson. We will only state their results for $(0, 1)$-forms, which are what we will need later in this paper. Let $\Omega$ be a bounded pseudoconvex domain in $\C^n$ and let $\psi$ be a plurisubharmonic function on $\Omega$. Let $L^2(\Omega,e^{-\psi})$ be the Hilbert space of all measurable functions satisfying
$$
\|f\|^2_\psi=\int_\Omega |f|^2 e^{-\psi}\, dV<\infty
$$
and let $L^2_{(0, 1)}(\Omega, e^{-\psi})$ be the space of $(0, 1)$-forms with coefficients in $L^2(\Omega, e^{-\psi})$.  Suppose $\partial\dbar\psi \ge c \partial\dbar |z|^2$ as currents where $c$ is a positive continuous function. (Here and in what follows, we will drop the letter $i$ from the real $(1, 1)$-form $i\partial\dbar\psi$.) H\"ormander's theorem says that for any $\dbar$-closed $(0, 1)$-form $f$, one can solve the equation
\begin{equation}\label{eq:d-bar}
\bar{\partial}u=f
\end{equation}
in the sense of distribution, together with the estimate
\begin{equation}\label{eq:hormander}
\int_\Omega |u|^2 e^{-\psi}\, dV\le 2\int_\Omega |f|^2 e^{-\psi}/c\, dV,
\end{equation}
provided the right hand side is finite (\cite[Theorem 2.2.1$'$ ]{Hormander65}; see also \cite[Lemma4.4.1]{Hormander91}).  Suppose $\psi\in C^2(\Omega)$. For any $(0, 1)$-form $f$, let
\[
|f|_{\partial\dbar\psi}=\sup\{|\langle f, X\rangle|; \ \ X\in T^{0, 1}(\Omega),\ |X|_{\partial\dbar\psi} \le 1\}
\]
be the norm induced by the $(1, 1)$-form $\partial\dbar\psi$, where $\langle \cdot, \cdot\rangle$
denotes the pairing of a form and a vector and $|X|_{\partial\dbar\psi}=\partial\dbar\psi(\overline{X}, X)$ is the length of $X$ with respect to $\partial\dbar\psi$. According to Demailly's reformulation of H\"{o}rmander's theorem, one can solve $\dbar u=f$ with the following estimate

\begin{equation}\label{eq:demailly1}
\int_\Omega |u|^2 e^{-\psi}\, dV\le \int_\Omega |f|_{\partial\bar{\partial}\psi}^2 e^{-\psi}\, dV,
\end{equation}
provided the right hand side is finite (see \cite[Theorem~4.1]{Demailly82}). It follows that for any function $u$ in the orthogonal complement of the nullspace $\scriptn(\dbar)$
in $L^2(\Omega, e^{-\psi})$, we have
\begin{equation}\label{eq:demailly2}
\int_\Omega |u|^2 e^{-\psi}\, dV\le \int_\Omega |\dbar u|_{\partial\bar{\partial}\psi}^2 e^{-\psi}\, dV.
\end{equation}

The following theorem is a slight reformulation of a result due to Berndtsson (\cite[Theorem~2.8]{Berndtsson01}). Berndtsson's proof uses an integration by parts formula related to the $\partial\dbar$-Bochner-Kodaira technique of Siu (see Section 3 in \cite{Siu82}). We provide a proof here as a simple application of \eqref{eq:demailly2}.
Similar approach was used in \cite{BerndtssonCharpentier00} to prove an estimate of Donnelly-Fefferman \cite{DonnellyFefferman83}.

\begin{theorem}\label{prop:bo}
Let $\Omega\subset\subset\C^n$ be a bounded pseudoconvex domain.  Let $\rho\in C^2(\Omega)$ with $\rho<0$.  Suppose that there exists a plurisubhamornic function $\psi\in C^2(\Omega)$ such that
$$
\Theta:=(-\rho)\partial\bar{\partial} \psi+\partial\bar{\partial}\rho
$$
is positive. Let $u$ be the solution to \eqref{eq:d-bar} that is orthogonal to $\scriptn(\dbar)$ in $L^2(\Omega, e^{-\psi})$. Then for any $0<r<1$,
\begin{equation}\label{eq:bo}
(1-r)\int_\Omega |u|^2 (-\rho)^{-r}e^{-\psi}\, dV\le \frac1r \int_\Omega |f|^2_{\Theta}(-\rho)^{1-r} e^{-\psi}\, dV.
\end{equation}
\end{theorem}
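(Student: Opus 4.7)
The plan is to deduce \eqref{eq:bo} from Demailly's inequality \eqref{eq:demailly2} by applying it with a weight $\tilde\psi$ twisted by a power of $-\rho$, and then extracting the constants $(1-r)$ and $1/r$ through elementary manipulations with the powers of $-\rho$. Since the integrals in \eqref{eq:bo} carry the weights $(-\rho)^{-r}$ on the left and $(-\rho)^{1-r}$ on the right, the natural candidate is $\tilde\psi = \psi - (1-r)\log(-\rho)$, chosen so that $e^{-\tilde\psi} = (-\rho)^{1-r}e^{-\psi}$ matches the right-hand side weight.

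A direct computation using $\partial\dbar\log(-\rho) = -\partial\dbar\rho/(-\rho) + \partial\rho\wedge\dbar\rho/\rho^2$ gives
\[
\partial\dbar\tilde\psi = r\,\partial\dbar\psi + (1-r)\,\frac{\Theta}{-\rho} - (1-r)\,\frac{\partial\rho\wedge\dbar\rho}{\rho^2}.
\]
The first two terms are non-negative by the hypothesis $\Theta>0$ and plurisubharmonicity of $\psi$, so $\partial\dbar\tilde\psi \gtrsim (1-r)\Theta/(-\rho)$ modulo the gradient correction. Then \eqref{eq:demailly2} applied to the minimal-norm solution $v$ of $\dbar v = f$ in $L^2(\Omega, e^{-\tilde\psi})$, together with the resulting bound $|f|^2_{\partial\dbar\tilde\psi} \le (-\rho)|f|^2_\Theta/(1-r)$, should yield
\[
\int_\Omega |v|^2\,(-\rho)^{1-r} e^{-\psi}\,dV \lesssim \frac{1}{1-r}\int_\Omega |f|^2_\Theta\,(-\rho)^{2-r} e^{-\psi}\,dV.
\]
To pass from $v$ to $u$, I use that $u - v$ is holomorphic and that $u \perp \scriptn(\dbar)$ in $L^2(\Omega, e^{-\psi})$; an integration by parts based on the identity $(1-r)(-\rho)^{-r}\,d\rho = -d\!\left[(-\rho)^{1-r}\right]$ should raise the power on the left from $(-\rho)^{1-r}$ to $(-\rho)^{-r}$, producing the constant $(1-r)$, while a Cauchy--Schwarz split with exponents coming from $r + (1-r)=1$ accounts for the $1/r$ on the right.

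The main obstacle will be the negative gradient term $-(1-r)\partial\rho\wedge\dbar\rho/\rho^2$ in $\partial\dbar\tilde\psi$, which blows up at the boundary faster than the positive $(1-r)\Theta/(-\rho)$ contribution; this is presumably where Berndtsson originally invoked Siu's $\partial\dbar$-Bochner--Kodaira technique. To stay within the framework of \eqref{eq:demailly2}, one can regularize by replacing $\log(-\rho)$ by a smoothed convex function (say, by capping it at $-N$ or by using $\chi_\eps(-\rho)$ for a convex $\chi_\eps$), run the above argument on the regularized weight, and then pass to the limit; alternatively, the orthogonality of $u$ in the original weight $e^{-\psi}$ should absorb the gradient term in a single Cauchy--Schwarz step, producing the clean constants $(1-r)$ and $1/r$.
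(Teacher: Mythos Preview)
Your proposal has a genuine gap, and it stems from two related issues.

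First, there is a sign error in your Levi form computation. The correct identity is
\[
\partial\dbar\log(-\rho)=\frac{\partial\dbar\rho}{\rho}-\frac{\partial\rho\wedge\dbar\rho}{\rho^{2}},
\]
so subtracting a multiple of $\log(-\rho)$ from $\psi$ contributes a \emph{positive} gradient term, not a negative one. Your ``main obstacle'' is therefore a phantom; once the sign is fixed, the weight $\varphi=\psi-r\log(-\rho)$ (note the coefficient $r$, not $1-r$) is plurisubharmonic with
\[
\partial\dbar\varphi\;\ge\;\frac{r}{-\rho}\,\Theta+\frac{1}{r}\,\partial\phi\wedge\dbar\phi,\qquad \phi:=-r\log(-\rho),
\]
and no regularization is needed.

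Second, and more seriously, the step ``pass from $v$ to $u$'' cannot work the way you sketch it. If $v$ is the minimal solution in $L^{2}(\Omega,e^{-\tilde\psi})$ with $e^{-\tilde\psi}=(-\rho)^{-r}e^{-\psi}$, then by minimality $\int|v|^{2}(-\rho)^{-r}e^{-\psi}\le\int|u|^{2}(-\rho)^{-r}e^{-\psi}$, which is the wrong direction; integration by parts in $\rho$ does not reverse this. The paper avoids this difficulty altogether: since $u\perp\scriptn(\dbar)$ in $L^{2}(\Omega,e^{-\psi})$, one has $ue^{\phi}\perp\scriptn(\dbar)$ in $L^{2}(\Omega,e^{-\varphi})$ (the factor $e^{\phi}$ cancels), so Demailly's inequality \eqref{eq:demailly2} can be applied \emph{directly to $ue^{\phi}$}. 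This yields
\[
\int_{\Omega}|u|^{2}(-\rho)^{-r}e^{-\psi}\,dV\le\int_{\Omega}\bigl|\dbar u+u\,\dbar\phi\bigr|^{2}_{\partial\dbar\varphi}\,(-\rho)^{-r}e^{-\psi}\,dV,
\]
and the positive rank-one piece $\frac{1}{r}\partial\phi\wedge\dbar\phi$ in $\partial\dbar\varphi$ is exactly what controls the extra $u\,\dbar\phi$ term: a pointwise Cauchy--Schwarz argument gives $|\dbar u+u\,\dbar\phi|^{2}_{\partial\dbar\varphi}\le r|u|^{2}+\frac{1}{r}|\dbar u|^{2}_{\Theta}(-\rho)$. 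Absorbing the $r|u|^{2}$ term on the left produces the factor $(1-r)$ and the estimate \eqref{eq:bo}. The essential trick you are missing is this multiplication by $e^{\phi}$, which both transports the orthogonality to the twisted space and makes the gradient term work \emph{for} you rather than against you.
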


\begin{proof} Let $\phi=-r\log(-\rho)$ and $\varphi=\phi+\psi$. Then $ue^\phi \perp \scriptn(\dbar)$ in $L^2(\Omega, e^{-\varphi})$.  Applying \eqref{eq:demailly2} to
$ue^{\phi}$ with weight $e^{-\varphi}$, we have
\[
\int_\Omega |u|^2 e^{\phi-\psi} \, dV\le \int_\Omega |\dbar u+ u\dbar\phi|^2_{\partial\dbar\varphi} e^{\phi-\psi}\, dV.
\]
It remains to show that
\begin{equation}\label{eq:d1}
|\dbar u+u\dbar\phi|^2_{\partial\dbar\varphi}\le r|u|^2+\frac{1}{r}|\dbar u|^2_{\Theta}(-\rho).
\end{equation}
Notice that
\[
\begin{aligned}
\partial\dbar\varphi&=\frac{r}{-\rho}\Theta+\frac{1}{r}\partial\phi\wedge\dbar\phi+
(1-r)\partial\dbar\psi \\
&\ge \frac{r}{-\rho}\Theta+\frac{1}{r}\partial\phi\wedge\dbar\phi =: \widetilde\Theta.
\end{aligned}
\]
Thus
\begin{equation}\label{eq:d2}
|\dbar u+u\dbar\phi |^2_{\partial\dbar\varphi}\le |\dbar u+u\dbar\phi|^2_{\widetilde\Theta}
=\sup\{\frac{|\langle\dbar u+u\dbar\phi, X\rangle|^2}{\frac{r}{-\rho}|X|^2_\Theta+\frac{1}{r}|\langle\dbar\phi, X\rangle|^2};\ \  X\in T^{0, 1}(\Omega)\}.
\end{equation}
Inequality \eqref{eq:d1} then follows from \eqref{eq:d2} and the inequalities $|\langle \dbar u, X\rangle|\le |\dbar u|_\Theta |X|_\Theta$ and
\[
2|u\langle \dbar\phi, X\rangle\overline{\langle\dbar u, X\rangle}|\le 2|u||X|_\Theta |\dbar u|_\Theta |\langle \dbar\phi, X\rangle|\le\frac{r^2}{-\rho}|u|^2|X|^2_\Theta+\frac{-\rho}{r^2}|\dbar u|^2_\Theta|\langle\dbar \phi, X\rangle|^2.
\]

\end{proof}

\section{Upper bound estimates}\label{sec:upper}

We prove the first part of Theorem~\ref{th:main} in this section. For a bounded domain $\Omega$ in $\C^n$, the {\it pluricomplex Green function} with a pole at $w\in \Omega$ is defined by
$$
g_\Omega(z,w)=\sup\left\{u(z); \ u\in PSH(\Omega),\,u<0,\,\lim\sup_{z\rightarrow w}(u(z)-\log|z-w|)<\infty\right\}.
$$
It is known that for any bounded hyperconvex domain $\Omega$, the pluricomplex Green function $g_\Omega(\cdot, w)\colon \Omega\to [-\infty, 0)$ is a continuous plurisubharmonic function such that $\lim_{z\to b\Omega} g_\Omega(z, w)=0$ (\cite{Demailly87}; see also Chapter 5 in \cite{Klimek91}).

Recall that a constant $a\in (0,\, 1]$ is said to be a {\it Diederich-Forn{\ae}ss exponent} for a bounded pseudoconvex domain $\Omega$ if there exist a negative plurisubharmonic function $\varphi$ on $\Omega$ and positive constants $C_1$ and $C_2$ such that
\begin{equation}\label{eq:df}
C_1 \delta^a(z)\le -\varphi(z) \le C_2 \delta^a(z).
\end{equation}
The supremum of all Diederich-Forn{\ae}ss exponents is called the {\it Diederich-Forn{\ae}ss index} of $\Omega$. It follows from the work of Diederich and Forn{\ae}ss that any bounded pseudoconvex domain with $C^2$-smooth boundary has a positive
Diederich-Forn{\ae}ss index, which can be arbitrarily small (\cite{DiederichFornaess77,
DiederichFornaess77b}). It was proved by Demailly \cite{Demailly87} that  bounded pseudoconvex Lipschitz domains are hyperconvex. More recently, Harrington \cite{Harrington07} showed that bounded pseudoconvex Lipchitz domains have indeed
positive Diederich-Forn{\ae}ss indices.

We will make essential use of the following quantitative estimate for the pluricomplex Green function due to Blocki (\cite[Theorem~5.2]{Blocki04};  see also \cite{Herbort00} for prior related results). We provide a proof below, following mostly Blocki's arguments\footnote{There are slight inaccuracies in the proof of Theorem 5.2 in \cite{Blocki04}: The inequality (5.6) and the choice of $\eps$ there seem to be incorrect.},
because we will need to refer back to it.

\begin{theorem}\label{th:blocki}
Let $\Omega$ be a bounded pseudoconvex domain in ${\mathbb C}^n$. Suppose there exists a negative plurisubharmonic function $\varphi$ on $\Omega$ such that
\begin{equation}\label{eq:est}
C_1\delta^a(z)\le -\varphi(z)\le C_2\delta^b(z), \qquad z\in\Omega
\end{equation}
for some positive constants $C_1, C_2$, and $a\ge b$. Then there exists positive constants $\delta_0$ and $C$ such that
\begin{equation}\label{eq:blocki}
\{z\in\Omega;\ g_\Omega(z,w)\le -1\}\subset \left\{ C^{-1}\delta^{\frac{a}b}(w)|\log{\delta(w)}|^{-\frac1b}\le \delta(z)\le C\delta^{\frac{b}a} (w)|\log{\delta(w)}|^{\frac{n}a}\right\},
\end{equation}
for any $w\in \Omega$ with $\delta(w)\le \delta_0$.
\end{theorem}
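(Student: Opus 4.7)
The plan is to invoke the defining supremum property of $g_\Omega(\cdot, w)$: since it is the supremum of all negative PSH functions $u$ on $\Omega$ with $\limsup_{z\to w}(u(z) - \log|z-w|) < \infty$, every such competitor $u$ satisfies $u \le g_\Omega(\cdot, w)$ pointwise, so $u(z_0) > -1$ forces $g_\Omega(z_0, w) > -1$; this is the contrapositive of each of the inclusions we seek. The work is therefore to exhibit the right PSH test functions, carefully built from $\varphi$ and from $\log|z-w|$, for each of the two directions.

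For the \emph{lower bound} $\delta(z) \ge c\,\delta(w)^{a/b}|\log\delta(w)|^{-1/b}$, I would test $g_\Omega(\cdot, w)$ against an explicit additive competitor
\[
u_w(z) := \log\frac{|z-w|}{R} + \lambda(w)\,\varphi(z),
\]
where $R = \operatorname{diam}(\Omega)$ and $\lambda(w) > 0$ is tuned as a function of $(-\varphi(w))$ and $|\log\delta(w)|$. Since $u_w \le 0$ is PSH and has a coefficient-one log pole at $w$, one has $u_w \le g_\Omega(\cdot, w)$. The case $|z-w| \le \delta(w)/2$ already gives $\delta(z) \ge \delta(w)/2$ and handles the conclusion directly; in the complementary range $|z-w| \ge \delta(w)/2$, the inequality $u_w(z) \le -1$ combined with $\log(R/|z-w|) \le |\log\delta(w)| + O(1)$ and the choice of $\lambda(w)$ forces $(-\varphi(z)) \ge c(-\varphi(w))/|\log\delta(w)|$, which by \eqref{eq:est} converts into the desired lower bound on $\delta(z)$.

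For the \emph{upper bound} $\delta(z) \le C\,\delta(w)^{b/a}|\log\delta(w)|^{n/a}$, a single additive competitor is insufficient, since simply increasing $\lambda$ above does not shrink the sub-level set enough to produce a polynomial-plus-logarithmic decay. Following Blocki, I would compare Monge-Amp\`ere masses on the sub-level set $\{g_\Omega(\cdot, w) \le -1\}$: using that the Monge-Amp\`ere mass of $g_\Omega(\cdot, w)$ equals $(2\pi)^n$ times the Dirac mass at $w$, together with the Bedford-Taylor comparison principle applied against a suitably scaled copy of $\varphi$, one obtains a volumetric inequality. Pulling this back to a pointwise bound and then optimizing the scaling parameter yields $(-\varphi(z)) \le C(-\varphi(w))|\log\delta(w)|^n$ on $\{g_\Omega \le -1\}$, which combines with \eqref{eq:est} to produce the claimed upper bound; the exponent $n$ traces to the $n$-fold wedge product in the Monge-Amp\`ere mass.

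\textbf{Main obstacle.} The upper-bound direction is technically the harder half, since it demands a global Monge-Amp\`ere comparison rather than a pointwise PSH competitor. Making the Bedford-Taylor comparison rigorous despite $\varphi$ being only continuous requires a standard regularization of $\varphi$ (e.g.\ by convolution inside $\Omega$) and a limit passage, and tracking constants carefully through the optimization in the scaling parameter---so as to recover exactly the exponent $n/a$ and to arrive at the uniform constants $C,\delta_0$ stated in \eqref{eq:blocki}---is the principal bookkeeping difficulty.
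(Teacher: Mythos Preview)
Your lower-bound argument is essentially the paper's: both build a PSH competitor from $\log(|z-w|/R)$ together with a suitable multiple of $\varphi$, then read off $(-\varphi(z))\ge c(-\varphi(w))/|\log\delta(w)|$ on the sublevel set and convert via \eqref{eq:est}. The paper phrases this as a ``max'' barrier (using $\log(|\cdot-w|/R)$ on $B(w,r/2)$ and $\frac{\log(2R/r)}{\inf_{B(w,r/2)}|\varphi|}\,\varphi$ outside), while you use an additive competitor; these are equivalent for the purpose at hand.

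Your upper-bound sketch, however, has a genuine gap. The paper does \emph{not} obtain the upper bound by comparing Monge--Amp\`ere masses of $g_\Omega(\cdot,w)$ against a scaled copy of $\varphi$ on the sublevel set. Instead it proceeds in three steps that your plan does not capture. First, it \emph{reverses the roles of $z$ and $w$}: the same barrier argument as in the lower bound, applied with pole at $z$, gives $g_\Omega(w,z)\ge -C\,r^b\delta^{-a}\log(1/\delta)$. Second, it invokes Blocki's quasi-symmetry estimate (his Theorem~5.1), which for any auxiliary level $0<\eps<r/2$ bounds $g_\Omega(z,w)$ by a combination of $\inf_{\delta(\zeta)=\eps}g_\Omega(\zeta,w)$ and $g_\Omega(w,z)^{1/n}$ with logarithmic factors; this is where the exponent $n$ enters, through the $n$-th root coming from the Monge--Amp\`ere comparison internal to Blocki's result. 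Third, it makes a specific choice of $\eps$ (depending on $r$, $\delta$, and a double-log term) and checks that with this choice the resulting lower bound on $g_\Omega(z,w)$ exceeds $-1$ whenever $\delta(z)\ge C\delta(w)^{b/a}|\log\delta(w)|^{n/a}$. Your phrase ``comparison principle against a suitably scaled copy of $\varphi$'' does not describe this mechanism: the crucial object being compared is $g_\Omega(w,z)$, not $\varphi$, and the passage from $g_\Omega(w,z)$ to $g_\Omega(z,w)$ requires Blocki's Theorem~5.1 rather than a direct Bedford--Taylor comparison on the sublevel set. Without the role-reversal and the quasi-symmetry input, I do not see how your outline produces the pointwise inequality $(-\varphi(z))\le C(-\varphi(w))|\log\delta(w)|^n$ that you claim.
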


\begin{proof} Assume that
$\Omega$ has diameter $R$. Let $w\in\Omega$ with $r=\delta(w)\le e^{-2}$. Let $z\in\Omega$. Suppose that $\delta=\delta(z)\le r/2$. It follows from comparison with the pluricomplex Green function of $B(w, R)$ that
\begin{equation}\label{eq:est1}
g_{\Omega}(\zeta, w)\ge \log(|\zeta-w|/R)
\end{equation}
for all $\zeta\in\Omega$. By the maximal property of the pluricomplex Green function, we have
\begin{equation}\label{eq:fu1}
g_\Omega(\zeta, w)\ge \frac{\log(2R/r)}{\inf\{|\varphi(\zeta)|; \ \zeta\in \overline{B(w, \frac{r}2)}\}} \varphi(\zeta)
\end{equation}
on $\Omega\setminus B(w, r/2)$ because the same inequality holds on the boundary.  By \eqref{eq:est},
\begin{equation}\label{eq:est2}
\inf\{|\varphi(\zeta)|; \ \zeta\in \overline{B(w, \frac{r}2)}\}\ge C (r/2)^a.
\end{equation}
Therefore,
\begin{equation}\label{eq:b1}
g_\Omega(z, w)\ge -C\frac{\delta^b}{r^a}\log\frac{1}{r}.
\end{equation}
It follows that
\begin{align}
\{z\in\Omega; \ g_\Omega(z, w)\le -1\}&\subset\left\{\delta(z)> \frac{r}2 \text{ or }
\delta(z)\ge C r^{\frac{a}b}(\log(1/r))^{-\frac1b} \right\}\notag\\
&\subset \left\{\delta(z)\ge C^{-1}\delta^{\frac{a}b}(w)|\log{\delta(w)}|^{-\frac1b}\right\},\label{eq:bb}
\end{align}
provided the last constant $C$ is sufficiently large.

Now suppose that $e^{-2}\ge\delta(z)\ge 2r$. It follows from \eqref{eq:b1} that for any $0<\eps<r/2$,
\begin{equation}\label{eq:fu2}
\inf_{\delta(\zeta)=\eps} g_\Omega(\zeta, w) \ge -C\frac{\eps^b}{r^a} \log\frac{1}r.
\end{equation}
We also obtain from \eqref{eq:b1} that
\begin{equation}\label{eq:fu3}
g_\Omega(w, z)\ge -C \frac{r^b}{\delta^a}\log \frac{1}{\delta},
\end{equation}
by reversing the r\^{o}les of $z$ and $w$. By Theorem 5.1 in \cite{Blocki04}, we have
\begin{equation}\label{eq:b2}
g_\Omega(z, w)\ge -C \frac{\log\frac{1}{\eps}}{\log\frac{r}{2\eps}}\left(\frac{\eps^b}{r^a}\log\frac{1}{r}+
\frac{r^{\frac{b}{n}}}{\delta^{\frac{a}{n}}}
(\log\frac1\eps)^{1-\frac{1}{n}}(\log\frac{1}{\delta})^{\frac{1}{n}}\right).
\end{equation}
We have followed closely Blocki's proof thus far. Here is where we start to deviate. Suppose further that
\begin{equation}\label{eq:est3}
\delta(z)\ge r^{\frac{b}{a}}\left(\log\frac{1}{r}\right)^{\frac{n}{a}}.
\end{equation}
Set
\begin{equation}\label{eq:est4}
\eps=\frac{1}{2}\frac{r^{\frac{1}{n}+\frac{a}{b}}}{\delta^{\frac{a}{bn}}}
\left(\frac{\log\log\frac{1}{r}}{\log\frac{1}{r}}\right)^\frac{1}{b}.
\end{equation}
Since $\delta\ge 2r$, $\log\log\frac{1}{r}\le \log\frac{1}{r}$, and
\begin{equation}\label{eq:est5}
\frac{1}{n}+\frac{a}{b}-\frac{a}{bn}\ge 1,
\end{equation}
we have
\[
\eps\le \frac{r^{\frac{1}{n}+\frac{a}{b}-\frac{a}{bn}}}{2^{1+\frac{a}{bn}}}<\frac{1}{2} r
\]
as required.
From \eqref{eq:est3} and \eqref{eq:est4}, we know that
\[
\frac{r}{2\eps}\ge r^{1-\frac{a}{b}} \left(\log\frac{1}{r}\right)^\frac{1}{b}\left(\frac{\log\frac{1}{r}}{\log\log\frac{1}{r}}
\right)^\frac{1}{b}\ge C \left(\log\frac{1}{r}\right)^\frac{1}{b},
\]
provided $r=\delta(w)$ is sufficiently small. (Recall that $b\le a$.)
Therefore,
\begin{equation}\label{eq:est6}
\log\frac{r}{2\eps}\ge C\log\log\frac{1}{r}.
\end{equation}
It is easy to see that
\begin{equation}\label{eq:est7}
\log\frac{1}{\eps}\le C \log \frac{1}{r} \quad\text{and}\quad
\log\frac{1}{\delta}\le C\log\frac{1}{r}.
\end{equation}
Combining \eqref{eq:est4}, \eqref{eq:est6}, and \eqref{eq:est7} with \eqref{eq:b2}, we then obtain
\[
g_\Omega(z, w)\ge -C\frac{r^{\frac{b}{n}}}{\delta^{\frac{a}{n}}} \log\frac{1}{r}.
\]
Therefore,
\[
\{g_\Omega(z, w)<-1\}\subset\{z\in\Omega; \ \delta(z)\ge e^{-2} \text{ or } \delta(z)\le C\delta^{b/a}(w)|\log \delta(w)|^{n/a}\}.
\]
Together with \eqref{eq:bb}, we then obtain \eqref{eq:blocki} by choosing a sufficiently small $\delta_0$ and a sufficiently large $C$. \end{proof}

We also need the following localization of the Bergman kernel (\cite[Lemma~4.2]{Chen99}; also \cite[Proposition~3.6]{Herbort99}).

\begin{proposition}\label{prop:c}
Let $\Omega$ be a bounded pseudoconvex domain in $\C^n$. Then there exists a positive constant $C$ such that for any $w\in \Omega$,
\[
K_\Omega(w,w)\ge C K_{\{g_\Omega(\cdot,w)< -1\}}(w,w).
\]
\end{proposition}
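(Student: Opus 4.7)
My plan is to establish the proposition by an Ohsawa--Takegoshi style extension. Writing $\Omega_w := \{z\in\Omega : g_\Omega(z,w) < -1\}$, I will show that every $f \in A^2(\Omega_w)$ admits a holomorphic extension $F \in A^2(\Omega)$ with $F(w) = f(w)$ and $\|F\|_\Omega \le C_0\|f\|_{\Omega_w}$ for a constant $C_0$ independent of $f$ and $w$. Combining this with the extremal characterization \eqref{eq:bergman} applied on $\Omega$ and on $\Omega_w$ immediately yields $K_\Omega(w,w) \ge C_0^{-2}\, K_{\Omega_w}(w,w)$.

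The construction rests on two ingredients. First, a smooth cutoff $\chi\colon \Omega\to [0,1]$ with $\chi\equiv 1$ in a neighborhood of $w$ contained in $\{g_\Omega(\cdot,w) < -2\}$, with $\operatorname{supp}\chi\subset\Omega_w$, and with $\operatorname{supp}\dbar\chi \subset \{-2 < g_\Omega(\cdot,w) < -1\}$; such a $\chi$ is obtained by composing a smooth one-variable cutoff with a standard regularization of $g_\Omega(\cdot,w)$. Second, the plurisubharmonic weight
\[
\psi(z) := 2n\, g_\Omega(z,w) + |z|^2,
\]
for which $\partial\dbar\psi \ge \partial\dbar|z|^2$. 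Since $g_\Omega(\cdot,w)$ has a logarithmic pole at $w$, $e^{-\psi(z)}$ grows at least like a constant multiple of $|z-w|^{-2n}$ near $w$ and so is non-integrable there; on $\operatorname{supp}\dbar\chi$, however, the weight is bounded since $g_\Omega(\cdot,w) > -2$.

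With these in place, H\"ormander's estimate \eqref{eq:hormander} produces a solution $u$ of $\dbar u = \dbar(\chi f) = f\,\dbar\chi$ with
\[
\int_\Omega |u|^2 e^{-\psi}\,dV \;\le\; 2\int_\Omega |f\,\dbar\chi|^2 e^{-\psi}\,dV \;\le\; C_1 \|f\|^2_{\Omega_w},
\]
the last inequality using the support and weight bounds above. Since $\chi\equiv 1$ near $w$, $u$ is holomorphic in that neighborhood, and local $L^2$-integrability against the non-integrable weight $|z-w|^{-2n}$ forces $u(w) = 0$. Setting $F := \chi f - u$ gives a holomorphic function on $\Omega$ with $F(w) = f(w)$, and the bound $\|F\|_\Omega \le C_0\|f\|_{\Omega_w}$ follows from the triangle inequality together with the fact that $e^{-\psi}$ is bounded below by a positive constant on $\Omega$ (using $g_\Omega\le 0$ and boundedness of $\Omega$). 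The only genuine technical obstacle is producing the smooth cutoff $\chi$ from the generally non-smooth plurisubharmonic function $g_\Omega(\cdot,w)$; this is handled by approximating from above by smooth plurisubharmonic functions on a relatively compact exhaustion, which is standard.
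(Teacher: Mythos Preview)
Your overall strategy---cut off, solve $\dbar$ with a weight carrying a $2n\,g_\Omega(\cdot,w)$ pole to force the correction to vanish at $w$, then invoke the extremal property \eqref{eq:bergman}---is exactly the approach of the references the paper cites for this proposition (the paper itself gives no proof, only the citations to \cite{Chen99} and \cite{Herbort99}). There is, however, a real gap at the step
\[
2\int_\Omega |f\,\dbar\chi|^2 e^{-\psi}\,dV \;\le\; C_1\,\|f\|^2_{\Omega_w}
\]
with $C_1$ independent of $w$. Since $\chi=\chi_0(g_j)$ for a one-variable cutoff $\chi_0$ and a mollification $g_j$ of $g_\Omega(\cdot,w)$, one has $|\dbar\chi|=|\chi_0'(g_j)|\,|\dbar g_j|$, and the factor $|\dbar g_j|$ carries no uniform bound as $w\to b\Omega$. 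Your weight $\psi=2n\,g_\Omega(\cdot,w)+|z|^2$ only gives $\partial\dbar\psi\ge\partial\dbar|z|^2$, so H\"ormander's estimate \eqref{eq:hormander} with $c\equiv 1$ cannot absorb $|\dbar g_j|^2$; neither the ``support'' nor the ``weight'' bounds you invoke touch this term.

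The standard fix---used verbatim in the paper's Section~\ref{sec:lower}---is to add a term such as $-\log(-g_j+1)$ to the weight and to replace \eqref{eq:hormander} by Demailly's estimate \eqref{eq:demailly1}. Then
\[
\partial\dbar\psi_j \;\ge\; \partial\log(-g_j+1)\wedge\dbar\log(-g_j+1)\;=\;\frac{\partial g_j\wedge\dbar g_j}{(-g_j+1)^2},
\]
so on $\operatorname{supp}\dbar\chi\subset\{-2<g_j<-1\}$ one obtains $|f\,\dbar\chi|^2_{\partial\dbar\psi_j}\le C|f|^2$ with $C$ depending only on $\chi_0$, hence independent of $w$. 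After this correction the remainder of your argument---the non-integrability of $e^{-2ng}$ at $w$ forcing $u(w)=0$, the uniform lower bound $e^{-\psi}\ge C>0$ on $\Omega$, and the passage to $K_\Omega(w,w)\ge C_0^{-2}K_{\Omega_w}(w,w)$ via \eqref{eq:bergman}---goes through unchanged.
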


To illustrate the idea of the proof, we first prove the following weaker version of Theorem~\ref{th:main}~(1):

\begin{proposition}\label{prop:weak}
Let $\Omega\subset\subset\C^n$ be a pseudoconvex domain with $C^2$-smooth boundary. Suppose that the Diederich-Forn{\ae}ss index of $\Omega$ is $\beta$.
Then for any $a\in (0,\ \beta)$, there exists a constant $C>0$ such that
\begin{equation}\label{eq:weak}
\frac{S(z,z)}{K(z, z)}\le  C \delta(z)|\log(\delta(z))|^{n/a}.
\end{equation}
\end{proposition}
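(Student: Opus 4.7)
The plan is to combine Blocki's localization of sublevel sets of the pluricomplex Green function (Theorem~\ref{th:blocki}) with the local reproducing inequality for the Bergman kernel (Proposition~\ref{prop:c}) and the harmonic Hardy space machinery from Section~\ref{sec:prelim}. Fix $a\in(0,\beta)$, which by hypothesis is a Diederich--Forn{\ae}ss exponent, so that $C_1\delta^a\le-\varphi\le C_2\delta^a$ for some negative plurisubharmonic $\varphi$ on $\Omega$. Applying Theorem~\ref{th:blocki} with $a=b$ produces constants $\delta_0,C>0$ such that, writing $\eta(w):=C\delta(w)|\log\delta(w)|^{n/a}$ and $D_w:=\{z\in\Omega:g_\Omega(z,w)<-1\}$, one has the shell inclusion $D_w\subset\{\delta\le\eta(w)\}$ whenever $\delta(w)\le\delta_0$.

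For any $f\in H^2(\Omega)$ with $\|f\|_{b\Omega}\le 1$ (verifying $f\in A^2(D_w)$ along the way via the Poisson representation), the extremal property \eqref{eq:bergman} applied to $D_w$ combined with Proposition~\ref{prop:c} gives
\begin{equation*}
|f(w)|^2\le K_{D_w}(w,w)\int_{D_w}|f|^2\,dV\le C\,K_\Omega(w,w)\int_{D_w}|f|^2\,dV.
\end{equation*}
Taking the supremum over such $f$ via \eqref{eq:szego} will yield the proposition once I establish the companion bound
\begin{equation*}
(\ast)\qquad\int_{D_w}|f|^2\,dV\le C\,\eta(w)\,\|f\|_{b\Omega}^2.
\end{equation*}

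For $(\ast)$ I would use the coarea route: since $b\Omega\in C^2$, the distance $\delta$ is $C^2$ with $|\nabla\delta|\equiv 1$ in a one-sided tubular neighborhood of $b\Omega$, and the inclusion $D_w\subset\{\delta\le\eta(w)\}$ yields
\begin{equation*}
\int_{D_w}|f|^2\,dV\le C\int_0^{\eta(w)}\!\!\left(\int_{bD_s}|f|^2\,dS\right)ds\le C\,\eta(w)\sup_{0<s\le\eta(w)}\int_{bD_s}|f|^2\,dS,
\end{equation*}
where $D_s:=\{\delta>s\}$. It remains to control the supremum by $\|f\|_{b\Omega}^2$ uniformly in $s$. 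This is handled exactly as in the proof of Lemma~\ref{lm:hhardy}: dominate $|f|^2$ by the Poisson integral $g=P[|f^*|^2]$ of its boundary values, then apply the reproducing identity $g(x_0)=\int_{bD_s}P_{D_s}(x_0,y)g(y)\,dS(y)$ at a fixed interior point $x_0$ with $\delta(x_0)$ bounded below. A uniform positive lower bound on $P_{D_s}(x_0,\cdot)$, available because the family $\{D_s\}$ varies $C^2$-smoothly in $s$, gives $\int_{bD_s}g\,dS\le Cg(x_0)\le C'\|f\|_{b\Omega}^2$, which dominates $\int_{bD_s}|f|^2\,dS$. Plugging in yields $(\ast)$, and the range $\delta(w)\ge\delta_0$ is trivial after enlarging $C$.

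The principal technical point is precisely this uniform-in-$s$ majorization. A naive alternative---plugging the weighted identity $\|f\|_{b\Omega}^2=\lim_{r\to 1^-}(1-r)\int_\Omega|f|^2\delta^{-r}\,dV$ from Lemma~\ref{lm:hardy} with an optimized $r=r(\eta)$---only gives $\int_{\{\delta\le\eta\}}|f|^2\,dV\le C\eta|\log\eta|\|f\|_{b\Omega}^2$, losing one logarithmic power and producing exponent $n/a+1$ in place of $n/a$. The $C^2$-stability of Poisson kernels under normal perturbations of $b\Omega$ is what recovers the sharp exponent claimed in the proposition.
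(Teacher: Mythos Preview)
Your proposal is correct and follows essentially the same route as the paper's proof: Blocki's inclusion \eqref{eq:b7} for the sublevel set $\{g_\Omega(\cdot,w)<-1\}$, the coarea decomposition $\int_{D_w}|f|^2\,dV\le\int_0^{\eta(w)}\int_{b\Omega_\eps}|f|^2\,dS\,d\eps$, the extremal characterizations \eqref{eq:bergman}--\eqref{eq:szego}, and the localization Proposition~\ref{prop:c}. The only difference is emphasis: the paper writes the uniform bound $\int_{b\Omega_\eps}|f|^2\,dS\le C\|f\|_{b\Omega}^2$ as a single inequality (relying on the Poisson-majorant argument already carried out in the proof of Lemma~\ref{lm:hhardy} with $D_1=D_2=\Omega$), whereas you spell out that this is the ``principal technical point'' and sketch the argument again; your closing remark about the suboptimal exponent one would get from Lemma~\ref{lm:hardy} alone is a nice observation but not needed for the proof.
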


\begin{proof} By the definition of the Diederich-Forn{\ae}ss index, there exists a negative plurisubharmonic function $\varphi$ satisfying \eqref{eq:df}.  By Theorem~\ref{th:blocki}, there exists a positive constant $C$ such that
\begin{equation}\label{eq:b7}
\{g_\Omega(\cdot, w)<-1\}\subset \{\delta(\cdot)<C\delta(w)|\log\delta(w)|^{n/a}\}
\end{equation}
for any $w\in\Omega$ sufficiently closed to the boundary.  Therefore, for any $f\in H^2(\Omega)$,
\[
\int_{\{g_\Omega(\cdot, w)<-1\}} |f|^2\, dV\le \int_{0}^{C\delta(w)|\log\delta(w)|^{n/a}} \, d\eps
\int_{b\Omega_\eps} |f|^2 \, dS\le C\|f\|^2_{b\Omega}\delta(w)|\log\delta(w)|^{n/a}.
\]
It then follows from the extremal properties \eqref{eq:bergman} and \eqref{eq:szego} that
\[
S(w, w)\le C \delta(w)|\log\delta(w)|^{n/a}K_{\{g(\cdot, w)<-1\}}(w, w).
\]
Applying Proposition~\ref{prop:c}, we then conclude the proof of the proposition.
\end{proof}

To get from Proposition~\ref{prop:weak} to the first statement of Theorem~\ref{th:main}, we use Lemma~\ref{lm:lszego} to localize the Szeg\"{o} kernel and then apply the following fact: For any $z_0\in b\Omega$ and $a\in (0,\ 1)$, there exist a defining function $r$ of $\Omega$ and a neighborhood $U$ of $z_0$ such that $\varphi_2=-(-r)^a$ is strictly plurisubharmonic on $U\cap\Omega$ (\cite{DiederichFornaess77}, Remark on p.~133).  The problem is that this function $\varphi_2$ is not an exhaustion function of $U\cap\Omega$ and thus one cannot directly apply Theorem~\ref{th:blocki}. We now show how to overcome this difficulty and prove Theorem~\ref{th:main}~(1).

Let $\chi(t)$ be a smooth function such that $\chi(t)=0$ when $t\le 1$, $\chi(t)>0$ is strictly increasing and convex when $t>1$. We may further assume that $\chi(t)=\exp(-1/(t-1))$ when $t\in (1,\ 5/4)$ so that $(\chi(t))^b\in C^\infty(\R)$ for any positive number $b$. Let
\[
\varphi_1(z)=-(-r)^a+M\chi(|z-z_0|^2/m^2).
\]
Then $\varphi_1$ is strictly plurisubharmonic on $U\cap\Omega$.  Write $g(z)=M\chi(|z-z_0|^2/m^2)$. Let
\[
\widetilde\Omega=\{z\in\Omega; \ \ \varphi_1(z)=-(-r)^a+g<0\}.
\]
By choosing $m$ sufficiently small and $M$ sufficiently large, we know that
$B(z_0, m)\cap \Omega\subset\widetilde\Omega$ and
$\widetilde\Omega\subset B(z_0, 2m)$. Furthermore, $\widetilde\Omega$ is pseudoconvex with a $C^2$-smooth defining function
\[
\tilde r=r+g^{1/a}
\]
(see, for example, \cite[pp.~470--471]{Bell86}).

Evidently, $\varphi_1$ is a plurisubharmonic exhaustion function for $\widetilde\Omega$. However, $\varphi_1$ does not satisfy \eqref{eq:df}. In fact, it is easy to show that
there exists a positive constant $C_1$ such that
\begin{equation}\label{eq:loc1}
C_1|\tilde r|\le -\varphi_1\le |\tilde r|^a, \quad z\in\widetilde\Omega.
\end{equation}
If we directly invoke Theorem~\ref{th:blocki} with \eqref{eq:loc1}, we obtain
\[
\{g_{\widetilde\Omega}(\cdot, w)<-1\}\subset \{\delta(\cdot)<C\delta^a(w)|\log\delta(w)|^{n}\}.
\]
Consequently, we have as in the proof of Proposition~\ref{prop:weak} that
\[
S(z, z)/K(z, z)\le C \delta^{a}(z)|\log\delta(z)|^{n},
\]
which is even weaker than \eqref{eq:weak}.

Instead of directly appealing to Theorem~\ref{th:blocki}, we proceed
as follows. We follow the proof of Theorem~\ref{th:blocki} with $\Omega$ replaced by $\widetilde\Omega$, and with $\delta(z)=\delta_{\widetilde\Omega}(z)$ now denoting the Euclidean distance to $b\widetilde\Omega$. Notice that $C^{-1}\delta\le |\tilde r|\le C\delta$ on $\widetilde\Omega$. Assume that $|w-z_0|<m$. Applying \eqref{eq:fu1} to the function $\varphi_1$, we have
\begin{equation}\label{eq:fu2b}
\inf_{\delta(\zeta)=\eps} g_{\widetilde\Omega}(\zeta, w)\ge \frac{\log(2R/r)}{\inf\{|\varphi_1(\zeta)|; \ \zeta\in \overline{B(w, \frac{r}2)}\}} \inf_{\delta(\zeta)=\eps}\varphi_1(\zeta)\ge -C\frac{\eps^a}{r^a} \log\frac{1}r,
\end{equation}
which is our analogue in this case to \eqref{eq:fu2}. (Here we have $a=b$.)  Now applying \eqref{eq:fu1} to the function $\varphi_2=-(-r)^a$ with the r\^{o}le of $z$ and $w$ reversed, we have
\[
g_{\widetilde\Omega}(\zeta, z)\ge \frac{\log(2R/\delta)}{\inf\{|\varphi_2(\zeta)|; \ \zeta\in \overline{B(z, \frac{\delta}2)}\}} \varphi_2(\zeta).
\]
on $\widetilde\Omega\setminus B(z, \delta/2)$. It follows that
\begin{equation}\label{eq:fu3b}
g_{\widetilde\Omega}(w, z)\ge -C\frac{r^a}{\delta^a}\log\frac{1}{\delta},
\end{equation}
which plays the r\^{o}le of \eqref{eq:fu3} in this case. Following exactly
the same lines for the rest of the proof of Theorem~\ref{th:blocki}, we then
obtain
\[
\{z\in\widetilde\Omega; \ g_{\widetilde\Omega}(z, w)<-1\}\subset \{z\in\widetilde\Omega; \ \delta(z)\le C\delta(w)|\log\delta(w)|^{n/a}\}.
\]
From the proof of Proposition~\ref{prop:weak}, we then have
\[
\frac{S_{\widetilde\Omega}(w,w)}{K_{\widetilde\Omega}(w, w)}\le  C \delta(w)|\log(\delta(w))|^{n/a},
\]
when $w$ is sufficiently close to $z_0$.  By the localization property of the Bergman kernel (see the proof of Theorem 1 in \cite{Ohsawa81}; also \cite[Proposition 1]{DFH84}), $K_\Omega(w, w)\ge CK_{\widetilde\Omega}(w, w)$. Together with Lemma~\ref{lm:lszego}, we then conclude the proof of the first statement in Theorem~\ref{th:main}.

\section{Lower bound estimates}\label{sec:lower}

Recall that a continuous function $\rho$ is said to be a defining function of a domain $\Omega\subset\C^n$ if $\Omega=\{z\in \C^n; \ \rho(z)<0\}$ and $C^{-1}\delta\le\rho\le C\delta$ for a constant $C>0$. We also assume the defining function $\rho$ to be in the same smoothness class as that of the boundary $b\Omega$. A bounded domain $\Omega\subset\C^n$ is {\it $\delta$-regular} if there exist a neighborhood $U$ of $b\Omega$, a bounded continuous plurisubharmonic function $\varphi$ on $U\cap\Omega$, and a defining function $\rho$ of
$\Omega$ such that
\begin{equation}\label{eq:delta}
\partial\dbar\varphi \ge \rho^{-1} \partial\dbar\rho
\end{equation}
on $U\cap\Omega$ as currents. By adding $|z|^2$ to $\varphi$, we may assume that it is strictly plurisubharmonic. By Richberg's approximation theorem (\cite[Satz~4.3]{Richberg68}), we may further assume that $\varphi\in C^\infty(\Omega)$.

\begin{proposition}\label{prop:h} Let $\Omega\subset\subset\C^n$ be a $\delta$-regular domain. Then $\Omega$ is hyperconvex with a positive Diederich-Forn{\ae}ss index.
\end{proposition}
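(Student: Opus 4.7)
The proposition reduces to producing a negative plurisubharmonic function $\psi$ on $\Omega$ with $C_1\delta^a\le -\psi\le C_2\delta^a$ for some $a\in(0,1]$: such a $\psi$ witnesses a Diederich-Forn{\ae}ss exponent equal to $a$ and is a continuous bounded plurisubharmonic exhaustion of $\Omega$, whence hyperconvexity. So the task is to build one such $\psi$.

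The heart of the construction is a single computation near $b\Omega$. Using the identity $i\partial\dbar\log(-\rho)=i\rho^{-1}\partial\dbar\rho-i\rho^{-2}\partial\rho\wedge\dbar\rho$ together with the $\delta$-regularity hypothesis $i\partial\dbar\varphi\ge i\rho^{-1}\partial\dbar\rho$, one obtains
\[
i\partial\dbar\bigl[-\log(-\rho)+\varphi\bigr]\;=\;\bigl(i\partial\dbar\varphi-i\rho^{-1}\partial\dbar\rho\bigr)\;+\;i\rho^{-2}\partial\rho\wedge\dbar\rho\;\ge\;0
\]
on $U\cap\Omega$ as currents. Equivalently, setting $\tilde\rho:=\rho e^{-\varphi}$, the function $-\log(-\tilde\rho)$ is plurisubharmonic on $U\cap\Omega$, and since $\varphi$ is bounded we have $-\tilde\rho\asymp\delta$ there.

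With this in hand I would test the candidate $\psi_0:=-(-\tilde\rho)^{a}e^{-aK|z|^{2}}$ on a collar $V\cap\Omega$ with $V\Subset U$, for small $a\in(0,1)$ and large $K>0$. Applying the identity $i\partial\dbar u/u=i\partial\log u\wedge\dbar\log u+i\partial\dbar\log u$ to $u=-\psi_0$ and inserting the lower bound on $-i\partial\dbar\log(-\tilde\rho)$ from the previous paragraph, the plurisubharmonicity of $\psi_0$ reduces to a pointwise estimate
\[
a\,\bigl|\langle\partial\log(-\tilde\rho)-K\partial(|z|^{2}),X\rangle\bigr|^{2}\;\le\;|\langle\partial\log(-\rho),X\rangle|^{2}+K|X|^{2}
\]
for every $(1,0)$-vector $X$, which a Cauchy-Schwarz expansion verifies for $a$ small and $K$ large, the $K|z|^{2}$-convexity absorbing the contributions from $\partial\varphi$ and the cross-terms. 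To make this classical I would first replace $\varphi$ by a Richberg approximant on $V\cap\Omega$, retaining the current inequality with $\rho$.

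Finally, I would extend $\psi_0$ to all of $\Omega$ by taking the regularized maximum of $\psi_0$ with a bounded negative plurisubharmonic function such as $\varepsilon(|z|^{2}-R^{2})$ with $R$ exceeding the diameter of $\Omega$, so that the maximum equals $\psi_0$ in an inner collar of $b\Omega$ and equals the auxiliary function in the interior; the two-sided $\delta^{a}$ bounds persist near $b\Omega$. The main obstacle is the pointwise estimate above: the $\delta$-regularity hypothesis controls only the Hessian of $\varphi$ relative to $\partial\dbar\rho/\rho$ and gives no direct bound on $|\partial\varphi|^{2}$, so the coupled choice of $a$ small and $K$ large, together with a judicious Richberg regularization, is precisely what is needed to absorb those gradient terms and close the estimate.
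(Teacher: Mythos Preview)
Your outline is close to the paper's argument and shares the same skeleton: modify the defining function multiplicatively to $\tilde\rho=\rho e^{-(\cdot)}$, then show $-(-\tilde\rho)^\eta$ is plurisubharmonic for small $\eta$. However, the pointwise estimate you isolate,
\[
a\,\bigl|\langle\partial\log(-\rho)-\partial\varphi-K\partial|z|^{2},X\rangle\bigr|^{2}\le |\langle\partial\log(-\rho),X\rangle|^{2}+K|X|^{2},
\]
does \emph{not} close with only ``$a$ small, $K$ large, Richberg.'' Expanding the left side produces a term $a\,|\langle\partial\varphi,X\rangle|^{2}$, and absorbing the cross term coming from $K\partial|z|^{2}$ already forces $aK\lesssim 1$; hence the budget available on the right to swallow $a\,|\partial\varphi|^{2}$ is at best of order $K$, which only works if $|\partial\varphi|$ is uniformly bounded on the collar. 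The $\delta$-regularity hypothesis gives boundedness of $\varphi$ and a lower bound on its complex Hessian, but no control on $|\partial\varphi|$ near $b\Omega$, and Richberg's uniform approximation does not furnish gradient bounds either. So the argument, as written, has a genuine gap precisely at the point you flag as the ``main obstacle.''

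The missing idea, and the one the paper uses, is to first replace $\varphi$ by $\psi:=e^{\varphi}$. Since $\varphi$ is bounded and plurisubharmonic, $\psi$ is bounded with $1\le\psi\le K:=e^{M}$, still satisfies $\partial\dbar\psi\ge\rho^{-1}\partial\dbar\rho$, and crucially acquires the self-bounding gradient estimate
\[
\partial\psi\wedge\dbar\psi\;=\;e^{2\varphi}\,\partial\varphi\wedge\dbar\varphi\;\le\;K\,e^{\varphi}\bigl(\partial\dbar\varphi+\partial\varphi\wedge\dbar\varphi\bigr)\;=\;K\,\partial\dbar\psi.
\]
With $\tilde\rho:=\rho e^{-\psi}$ one then gets
\[
\partial\dbar\bigl(-\log(-\tilde\rho)\bigr)\;\ge\;\frac{1}{K}\,\partial\psi\wedge\dbar\psi+\frac{\partial\rho\wedge\dbar\rho}{\rho^{2}},
\]
so the Hessian itself controls the gradient term $|\partial\log(-\tilde\rho)|^{2}\le 2\bigl(|\partial\psi|^{2}+|\partial\log(-\rho)|^{2}\bigr)$, and $-(-\tilde\rho)^{\eta}$ is plurisubharmonic for all sufficiently small $\eta$ by Cauchy--Schwarz, with no auxiliary $K|z|^{2}$ weight needed. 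In short: your scheme becomes correct once you exponentiate $\varphi$ before forming $\tilde\rho$; without that step the gradient of $\varphi$ cannot be absorbed.
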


\begin{proof} Let $\varphi$ and $\rho$ be the functions that satisfy \eqref{eq:delta}. Assume $0\le\varphi\le M$ for some positive constant $M$. Let $\psi=e^\varphi$ and $K=e^M$. Then
\begin{equation}\label{eq:s}
1\le\psi\le K,  \quad \partial\psi\wedge\dbar\psi\le K \partial\dbar\psi, \quad
\text{ and } \quad \partial\dbar\psi\ge \frac{\partial\dbar\rho}{\rho}+K^{-1}\partial\psi\wedge\dbar\psi,
\end{equation}
on $U\cap\Omega$. Let
\[
\tilde\rho=\rho e^{-\psi} \quad \text{and} \quad r=-(-\tilde\rho)^\eta.
\]
It follows from a simple (formal) computation and \eqref{eq:s} that
\[
\begin{aligned}
\partial\dbar r=&\eta(-\tilde\rho)^\eta\left(\partial\dbar -\log(-\tilde\rho)-\eta\frac{\partial\tilde\rho\wedge\dbar\tilde\rho}{\tilde\rho^2}\right)\\
&\ge \eta(-\tilde\rho)^\eta\left(\frac{1}{K}\partial\psi\wedge\dbar\psi+\frac{\partial\rho
\wedge\dbar\rho}{\rho^2}-
\eta\frac{\partial\tilde\rho\wedge\dbar\tilde\rho}{\tilde\rho^2}\right)\\
&\ge\eta(-\tilde\rho)^\eta\left((1-\eta)\frac{\partial\rho
\wedge\dbar\rho}{\rho^2}+\eta\frac{\partial\rho}{\rho}\wedge\dbar\psi+\eta\partial\psi\wedge
\frac{\dbar\rho}{\rho}+(\frac{1}{K}-\eta)\partial\psi\wedge\dbar\psi\right).
\end{aligned}
\]
We then obtain from the Schwarz inequality that $\partial\dbar r$ is a positive current on $U\cap\Omega$, provided $\eta$ is sufficiently small. The extension of $r$ to the whole domain $\Omega$ is standard (see \cite[p.~133]{DiederichFornaess77}).
\end{proof}

\begin{proposition}\label{pro:c}
Let $\Omega$ be a smooth pseudoconvex bounded domain in $\C^n$. If $\Omega$ has a defining
function that is plurisubharmonic on $b\Omega$ or if $b\Omega$ is of D'Angelo finite type, then $\Omega$ is $\delta$-regular.
\end{proposition}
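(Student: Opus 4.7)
My plan is to treat the two sufficient conditions separately, first noting that the $\delta$-regularity inequality $i\partial\bar\partial\varphi\geq i\rho^{-1}\partial\bar\partial\rho$, on multiplying by $-\rho>0$, becomes the equivalent positivity
\[
\Theta := (-\rho)\cdot i\partial\bar\partial\varphi + i\partial\bar\partial\rho \geq 0
\]
as currents on $U\cap\Omega$ --- exactly the positivity of $\Theta$ from Theorem~\ref{prop:bo} (with $\psi=\varphi$). This reformulation makes the condition amenable to the defining-function modifications used in the Diederich--Forn{\ae}ss theory.

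For the case when $\Omega$ admits a $C^2$ defining function $\rho$ that is plurisubharmonic on $b\Omega$ (meaning $i\partial\bar\partial\rho(p)(X,\bar X)\geq 0$ for every $p\in b\Omega$ and every $X\in\C^n$), I would Taylor-expand the continuous Hermitian form $i\partial\bar\partial\rho$ in a tubular neighborhood of $b\Omega$. Compactness of $b\Omega$ yields a neighborhood $U$ of $b\Omega$ and a constant $C>0$ with $i\partial\bar\partial\rho\geq -C\delta\cdot i\partial\bar\partial|z|^2$ on $U\cap\Omega$. Since $\delta\asymp|\rho|$, this improves to $i\partial\bar\partial\rho\geq -C'|\rho|\cdot i\partial\bar\partial|z|^2$, and then $\varphi=C'|z|^2$ (smooth bounded PSH on $\C^n$) gives $\Theta = C'|\rho|\cdot i\partial\bar\partial|z|^2 + i\partial\bar\partial\rho \geq 0$, establishing $\delta$-regularity in this case.

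For the case when $b\Omega$ is of D'Angelo finite type, I would invoke Catlin's construction from~\cite{Catlin87}. That construction produces a family of smooth plurisubharmonic functions $\lambda_t$ on $\bar\Omega$ with $|\lambda_t|\leq 1$ whose complex Hessians are large on the strips $\{-t<\rho<0\}$. The plan is to superpose these dyadically, $\varphi=\sum_k c_k\lambda_{2^{-k}}$, with coefficients $c_k$ chosen so the series converges to a bounded PSH function whose Hessian is of order $|\rho|^{-1}\cdot i\partial\bar\partial|z|^2$ on each annulus $\{-2^{-k}<\rho<-2^{-k-1}\}$; alternatively one may first modify the defining function via a Diederich--Forn{\ae}ss-type trick $\tilde\rho=\rho\,e^{-\psi}$ to improve $i\partial\bar\partial\tilde\rho$ near $b\Omega$ enough to then apply the Taylor argument of the first case.

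The main obstacle is this second case. Property~$(P)$ alone, which finite type satisfies by~\cite{Catlin84}, only gives large Hessians on $b\Omega$, not the $|\rho|^{-1}$-scaling of $i\partial\bar\partial\varphi$ required for $\Theta\geq 0$ to hold in the direction where $i\partial\bar\partial\rho$ is negative. One must use the scale-sensitive construction of~\cite{Catlin87} and verify that the subelliptic gain exponent $\eta$ coming from Catlin's family, combined with the dyadic superposition (or the defining-function modification), yields Hessian growth of exactly order $|\rho|^{-1}$ rather than the weaker $|\rho|^{-2\eta}$ with $\eta$ depending on the type.
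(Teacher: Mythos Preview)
Your treatment of the first case (a defining function plurisubharmonic on $b\Omega$) is correct and coincides with the paper's: a Taylor/compactness argument gives $i\partial\bar\partial\rho\ge -C|\rho|\,i\partial\bar\partial|z|^2$ near $b\Omega$, so $\varphi=C|z|^2$ works.

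In the finite-type case there is a genuine gap, and you have in fact already put your finger on it in your last paragraph without resolving it. A bounded plurisubharmonic function cannot have complex Hessian of order $|\rho|^{-1}\,i\partial\bar\partial|z|^2$ in all directions near $b\Omega$; integrating along a complex line transversal to $b\Omega$ would force logarithmic blow-up. Concretely, if Catlin's family satisfies $i\partial\bar\partial\lambda_t\ge c\,t^{-\tau}\,i\partial\bar\partial|z|^2$ on the $t$-strip with $\tau<1$, then a dyadic superposition $\sum c_k\lambda_{2^{-k}}$ achieving growth $2^k$ on the $k$-th annulus requires $c_k\gtrsim 2^{k(1-\tau)}$, which diverges. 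So neither your superposition nor a defining-function modification of Diederich--Forn{\ae}ss type by itself can produce the isotropic estimate $i\partial\bar\partial\varphi\ge |\rho|^{-1}\,i\partial\bar\partial|z|^2$ that your plan targets.

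The paper's way out is not to seek that isotropic estimate at all, but to exploit the anisotropy of $\rho^{-1}\partial\bar\partial\rho$. Writing $X=X_T+X_N$ for the complex tangential/normal decomposition and using that the Levi form is nonnegative on complex tangent vectors (pseudoconvexity), one gets for any $\eta\in(0,1)$
\[
\rho^{-1}\partial\bar\partial\rho(X,\bar X)\le C\big(|X|^2+|\rho|^{-1}|X|\,|X_N|\big)\le C\big(|\rho|^{-\eta}|X|^2+|\rho|^{-2+\eta}|X_N|^2\big).
\]
The first term needs only Hessian growth $|\rho|^{-\tau}$ with $\tau>\eta$, which is exactly what Catlin's bounded $\lambda$ supplies. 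The second term, supported in the single complex normal direction, is absorbed by the Diederich--Forn{\ae}ss barrier: for small $\eta$,
\[
\partial\bar\partial\big(-(-\rho)^\eta\big)\ge C\eta|\rho|^{\eta}\Big(\partial\bar\partial|z|^2+|\rho|^{-2}\partial\rho\wedge\bar\partial\rho\Big),
\]
whose $|\rho|^{\eta-2}\partial\rho\wedge\bar\partial\rho$ piece dominates $|\rho|^{-2+\eta}|X_N|^2$. Taking $\varphi=C\big(\lambda-(-\rho)^\eta\big)$ with $\eta<\tau$ then yields $\delta$-regularity. The missing idea in your proposal is precisely this tangential/normal splitting of $\rho^{-1}\partial\bar\partial\rho$, which converts the unattainable isotropic $|\rho|^{-1}$ requirement into two attainable anisotropic ones.
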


\begin{proof} If $\Omega$ has a defining function $\rho$ that is plurisubharmonic on $b\Omega$. Then
$\partial\dbar\rho\ge  C\rho \partial\dbar |z|^2$. Therefore in this case, we
can choose $\varphi(z)=C|z|^2$ with a sufficiently large $C$.

The case when $\Omega$ is of D'Angelo finite type is a consequence of Catlin's construction of bounded plurisubharmonic function \cite{Catlin87} (see \cite[p.~464]{Straube97} for a related discussion): There exist positive constants $\tau<1$, $C>0$, and a smooth bounded plurisubharmonic function $\lambda$ on $\Omega$ such that
\begin{equation}
\partial\bar{\partial}\lambda\ge C \frac{\partial\bar{\partial}|z|^2}{|\rho|^{\tau}}.
\end{equation}
By Oka's lemma, we can choose a defining function $\rho$ such that $\partial\dbar(-\log(-\rho))\ge \partial\dbar |z|^2$. It then follows from a theorem
of Diederich-Fornaess that for any sufficiently small $\eta$,
\begin{equation}
\partial\bar{\partial}( -(-\rho)^\eta )\ge C \eta |\rho|^{\eta}\left(\partial\dbar |z|^2+|\rho|^{-2}\partial\rho\wedge\dbar\rho\right)
\end{equation}
for some positive constant $C$ (\cite{DiederichFornaess77}, Theorem 1 and its proof; compare also \cite[Lemma 2.2]{CSW04}).

Now we fix an $\eta\in (0,\ \tau)$. Write $N=|\partial\rho|^{-1}\sum \rho_{\bar z_j}{\partial/\partial z_j}$.
For any $(1, 0)$-vector $X$, write $X_N=\langle X, N\rangle N$ and
$X_T=X-X_N$.  By the pseudoconvexity of $b\Omega$, we have
\[
\rho^{-1}\partial\dbar\rho (X, \ov{X}) \le C(|X|^2+|\rho|^{-1}|X||X_N|)\le C(|\rho|^{-\eta} |X|^2+ |\rho|^{-2+\eta}|X_N|^2)
\]
The desirable function is then given by
\[
\varphi=C(\lambda-(-\rho)^\eta)
\]
for any sufficiently large $C>0$.
\end{proof}

We are now in position to prove the second part of Theorem~\ref{th:main}.  Let $\kappa$ be
a standard Friedrichs mollifier. Let $\eps_j$ be a decreasing sequence of positive number tending to $0$.  Let $w$ be a point in $\Omega$, sufficiently closed to the boundary $b\Omega$. Let $g_j=g_\Omega(\cdot, w)*\kappa_{\eps_j}$. Then $g_j$ is a decreasing sequence of plurisubharmonic functions on $\Omega_j=\{z\in\Omega; \ \delta(z)<\eps_j\}$ with limit $g_\Omega(\cdot, w)$. By Oka's lemma, $\Omega_j$ is pseudoconvex.  Let
$$
\psi=2ng_\Omega(\cdot, w)-\log(-g_\Omega(\cdot, w)+1)+\varphi  \quad \text{and}\quad
\psi_j=2n g_j-\log(-g_j+1)+\varphi,
$$
where $\varphi$ is the smooth bounded strictly plurisubharmonic function, obtained from the $\delta$-regularity assumption, such that $\partial\dbar\varphi \ge \rho^{-1}\partial\dbar\rho$ for a defining function $\rho$ of $\Omega$. Clearly $\psi_j$ is a plurisubharmonic function on $\Omega_j$. Moreover,
\begin{equation}\label{eq:f1}
\partial\bar{\partial}\psi_j\ge \partial\log(-g_j+1)\wedge\bar{\partial}\log(-g_j+1)
\end{equation}
and $\Theta_j=(-\rho)\partial\bar{\partial}\psi_j+\partial\bar{\partial}\rho$ is positive
on $\Omega_j$. Let $\chi:{\mathbb R}\rightarrow [0,1]$ be a $C^\infty$ cut-off function such that $\chi|_{(-\infty,-1)}=1$ and $\chi|_{(0,\infty)}=0$. Put
$$
v_j=\bar{\partial}\chi(-\log(-g_j)) \frac{K_\Omega(\cdot,w)}{\sqrt{K_\Omega(w,w)}}.
$$
Let $u_j$ be the solution to $\dbar u_j=v_j$ that is in the orthogonal complement of $\scriptn(\dbar)$ in $L^2(\Omega_j, e^{-\psi_j})$. By Demailly's estimate \eqref{eq:demailly1},
$$
\int_{\Omega_j} |u_j|^2 e^{-\psi_j}\, dV\le \int_{\Omega_j} |v_j|^2_{\partial\bar{\partial}\psi_j}e^{-\psi_j}\, dV.
$$
It follows from \eqref{eq:f1} that the right hand side is uniformly bounded from above, independent of $j$. Passing to a subsequence, we may assume that $u_j$ converges to $u\in L^2(\Omega, e^{-\psi})$ in the weak* topology. (We extend $u_j=0$ on $\Omega\setminus\Omega_j$.) Let
$$
f=\chi(-\log(-g_\Omega(\cdot,w)))K_\Omega(\cdot,w)/K_\Omega(w,w)^{1/2}-u.
$$
Then $f$ is holomorphic on $\Omega$. Since $u$ is holomorphic in a neighborhood of $w$ and $g_\Omega(z,w)=\log|z-w|+O(1)$ near $w$,
\[
u(w)=0.
\]
By Theorem~\ref{prop:bo}, for any $0<r<1$, we have
\begin{align}
(1-r)\int_{\Omega_j}|u_j|^2 (-\rho)^{-r} e^{-\psi_j}\, dV  &\le  \frac1r \int_{\Omega_j} |v_j|^2_{\Theta_j}(-\rho)^{1-r}e^{-\psi_j}\, dV\nonumber\\
& \le \frac{C}r \int_{{\rm supp\,}\bar{\partial}\chi(\cdot)}\frac{|K_\Omega(\cdot,w)|^2}{K_\Omega(w,w)}(-\rho)^{-r}\, dV.
\end{align}
By Theorem~\ref{th:blocki},
\[
\begin{aligned}
{\rm supp\,}\bar{\partial}\chi(\cdot)&\subset\{-e\le g_j(\cdot,w)\le -1\}\subset\{g_\Omega(\cdot, w)\le -1\}\\
&\subset \{C^{-1}|\rho(w)||\log(-\rho(w))|^{-1/a}\le |\rho|\},
\end{aligned}
\]
where $a$ is a Diederich-Forn{\ae}ss exponent for $\Omega$. Therefore, passing to the limit, we have
\begin{equation}\label{eq:lower}
(1-r)\int_\Omega|u|^2 (-\rho)^{-r} e^{-\psi}\, dV\le \frac{C}r\cdot \frac{|\log(-\rho(w))|^{r/a}}{|\rho(w)|^r}.
\end{equation}
Notice that $f$ is a holomorphic function on $\Omega$ such that $f(w)=K_\Omega(w,w)^{1/2}$ and $f=-u$ near $b\Omega$. Since $e^{-\psi}\ge e^{-\varphi}\ge C>0$,  by Lemma~\ref{lm:hardy} (and its proof), $f\in H^2(\Omega)$. Combining with \eqref{eq:lower}, we have
\begin{align*}
\int_{b\Omega} |f|^2\, dS&\le C\lim_{r\to 1^-} (1-r)\int_\Omega|f|^2 (-\rho)^{-r} e^{-\psi}\, dV\\
&=C\lim_{r\to 1^-} (1-r)\int_\Omega|u|^2 (-\rho)^{-r} e^{-\psi}\, dV\\
&\le C \frac{|\log(-\rho(w))|^{1/a}}{|\rho(w)|}.
\end{align*}
It then follows from the extremal property \eqref{eq:szego} of the Szeg\"{o} kernel that
\[
S_\Omega(w,w)\ge C\frac{|\rho(w)|}{|\log(-\rho(w))|^{1/a}}\cdot K_\Omega(w,w).
\]
This concludes the proof of Theorem~\ref{th:main}.

The proof of Theorem~\ref{th:convex} is similar that of Theorem~\ref{th:main}. The only difference is that, instead of Theorem~\ref{th:blocki}, we use the following well known estimate for the pluricomplex Green function on convex domains:
$$
\{g_\Omega(\cdot,w)\le -1\}\subset \{C^{-1}\delta(w)\le \delta(\cdot)\le C\delta(w)\}
$$
(see \cite{Blocki04}, Theorem 5.4).

\begin{remark}
It follows from the Ohsawa-Takegoshi extension theorem \cite{OhsawaTakegoshi87} that for any bounded pseudoconvex Lipschitz domain $\Omega$ in $\C^n$, $K(z, z)\ge C\delta^{-2}(z)$
for some constant $C>0$. Ohsawa  \cite{Ohsawa03} conjectured that the analogue estimate $S(z, z)\ge C\delta^{-1}(z)$ holds. Theorem~\ref{th:convex} confirms this conjecture for convex domains. Moreover, Theorem~\ref{th:main} implies that for a bounded $\delta$-regular domain $\Omega$ with $C^2$-smooth boundary,
\begin{equation}
S(z, z)\ge C\delta^{-1}(z)|\log\delta(z)|^{-1/a},
\end{equation}
where $a$ is any Diederich-Forn{\ae}ss exponent of $\Omega$.
\end{remark}

\bibliography{survey}
\providecommand{\bysame}{\leavevmode\hbox
to3em{\hrulefill}\thinspace}

\end{document}